\newtheorem{theorem}{Theorem}[section]
\newtheorem{lemma}[theorem]{Lemma} 
\newtheorem{corollary}[theorem]{Corollary}
\newtheorem{conjecture}[theorem]{Conjecture} 
\newtheorem{remark}[theorem]{Remark}
\newtheoremstyle{backref}
  {3pt}{3pt}		
  {\itshape}		
  {0pt}{\bfseries}	
  {.}				
  { }				
  {\thmname{#1} \thmnote{#3}}  
\theoremstyle{backref}
\newtheorem*{backthm}{Theorem}
\newtheorem*{backcor}{Corollary}
\newcommand{\abs}[1]{\left|{#1}\right|} 
\newcommand{\brac}[1]{\left[{#1}\right]} 
\newcommand{\setof}[2]{\left\{{#1}\,:\,{#2}\right\}}
\newcommand{\pr}[1]{\left({#1}\right)}
\newcommand{\cl}[1]{\overline{{#1}}}
\renewcommand{\t}[1]{\textnormal{{#1}}}
\newcommand{\dis}{\displaystyle}
\newcommand{\sub}{\subseteq}
\newcommand{\N}{\mathbb{N}}
\newcommand{\Z}{\mathbb{Z}}
\renewcommand{\d}{\delta}
\newcommand{\np}{\newpage}
\newcommand{\ld}{\ldots}
\newcommand{\K}{K_{\d,n-\d}}
\newcommand{\iso}{\cong}
\newcommand{\diam}{\textnormal{diam}}
\newcommand{\girth}{\textnormal{girth}}
\newcommand{\de}{\delta}
\newcommand{\D}{\Delta}
\title{A new method for enumerating independent sets of a fixed size in general graphs}
\author[J.~Alexander]{James Alexander}
\address[J.~Alexander]{University of Delaware, Newark DE 19716}
\email[J.~Alexander]{alex@math.udel.edu}
\author[T.~Mink]{Tim Mink}
\address[T.~Mink]{Montclair State University, Montclair NJ 07043}
\email[T.~Mink]{minkt@mail.montclair.edu}
\begin{document}
\maketitle

\begin{abstract}
	We develop a new method for enumerating independent sets of a fixed size in general graphs, and we use this method to show that a conjecture of  Engbers and Galvin holds for all but finitely many graphs. We also use our method to prove special cases of a conjecture of Kahn. In addition, we show that our method is particularly useful for computing the number of independent sets of small sizes in general regular graphs and Moore graphs, and we argue that it can be used in many other cases when dealing with graphs that have numerous structural restrictions.
\end{abstract}

\section{Introduction and notation} 
\label{sec:introduction}

We consider only simple graphs. Let $i_t(G)$ denote the number of independent sets in a graph $G$ of size $t$, and $i(G):=\sum_{t=0}^\infty i_t(G)$. The enumeration of independent sets in graphs with various degree restrictions has been a topic of much interest lately. Kahn made a breakthrough in these problems when he proved the following result in \cite{KAHN}. 

\begin{theorem}[Kahn]\label{thm:kahn}
	For any $n,r\in\N$, if $G$ is an $n$-vertex, $r$-regular bipartite graph with $r\geq 1$, then
\[
		i(G)\leq i(K_{r,r})^{\frac{n}{2r}}.
\]
\end{theorem}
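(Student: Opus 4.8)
The plan is to prove Kahn's theorem via the entropy method, which is the standard approach for this kind of counting inequality on bipartite graphs. Let $G$ be an $n$-vertex $r$-regular bipartite graph with bipartition $V(G) = A \cup B$, so that $|A| = |B| = n/2$. Choose an independent set $I$ uniformly at random from the collection of all independent sets of $G$, so that the (Shannon) entropy satisfies $H(I) = \log i(G)$. The goal is to bound $H(I)$ from above by $\frac{n}{2r}\log i(K_{r,r})$, since then exponentiating yields the claim.

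First I would encode $I$ by the pair of random sets $(X_A, X_B)$ where $X_A = I \cap A$ and $X_B = I \cap B$, so $H(I) = H(X_A, X_B) = H(X_A) + H(X_B \mid X_A)$. For the second term I would use subadditivity of entropy over the vertices of $B$: writing $B = \{b_1,\dots,b_{n/2}\}$ and letting $X_{b_j}$ be the indicator of $b_j \in I$, we get $H(X_B \mid X_A) \le \sum_{j} H(X_{b_j} \mid X_A)$, and in fact $H(X_{b_j}\mid X_A) \le H(X_{b_j} \mid X_{N(b_j)\cap A})$ since conditioning on more reveals at least as much and the relevant information about whether $b_j$ can be in $I$ is carried by its neighborhood. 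For the first term $H(X_A)$ I would apply Shearer's lemma (the entropy subadditivity inequality for covers): the neighborhoods $\{N(b_j) : j\}$ form an $r$-regular cover of $A$ (each vertex of $A$ lies in exactly $r$ of these neighborhoods, by $r$-regularity), so Shearer gives $H(X_A) \le \frac{1}{r}\sum_{j} H(X_{N(b_j)\cap A})$.

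Combining the two bounds, $H(I) \le \sum_{j=1}^{n/2}\left[\tfrac{1}{r}H(X_{N(b_j)}) + H(X_{b_j}\mid X_{N(b_j)})\right]$, where I abbreviate $X_{N(b_j)} := X_{N(b_j)\cap A}$. The key local step is then to show that each summand is at most $\frac{1}{r}\log i(K_{r,r})$. For a fixed $j$, consider the bipartite graph $K_{r,r}$ with one side being the $r$ neighbors of $b_j$ and the other side consisting of $b_j$ together with $r-1$ ``virtual'' copies of it; an independent set in this $K_{r,r}$ is specified by choosing an independent subset $S$ of the $r$-neighbor side (there are no edges among them in $G$, but the constraint comes from the other side) — more precisely, one shows $\frac{1}{r}H(X_{N(b_j)}) + H(X_{b_j}\mid X_{N(b_j)}) \le \frac{1}{r}\log i(K_{r,r})$ by a direct entropy-maximization argument: the left side is the entropy rate of a distribution on configurations $(\text{subset of } N(b_j),\ \text{status of } r \text{ independent copies of } b_j)$ that corresponds to a genuine distribution on independent sets of $K_{r,r}$, and entropy is maximized by the uniform distribution on that set.

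The main obstacle is this last local inequality — matching the per-vertex entropy bound exactly to $\frac{1}{r}\log i(K_{r,r})$. The subtlety is that we need the conditional distribution of $X_{b_j}$ given $X_{N(b_j)}$, together with the marginal of $X_{N(b_j)}$, to be realizable as (a scaled piece of) the uniform independent-set distribution on $K_{r,r}$; this requires checking that if $\sigma$ is the probability that $N(b_j)\cap I = \emptyset$ then the quantity $\frac{1}{r}H(X_{N(b_j)}) + H(X_{b_j}\mid X_{N(b_j)})$ is at most $\frac{1}{r}\log\big(2^r - 1 + 2\big)$ (using $i(K_{r,r}) = 2^{r+1}-1$), and optimizing over $\sigma$ and over the distribution of $X_{N(b_j)}$ on the $2^r-1$ non-empty subsets versus the empty subset. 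This optimization is a short convexity/Lagrange-multiplier computation, but getting the constant to come out exactly right (so the bound is tight at $K_{r,r}$ itself) is where all the care is needed; everything else is a routine assembly of standard entropy inequalities.
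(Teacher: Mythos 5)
First, a point of orientation: the paper does not prove Theorem~\ref{thm:kahn} at all --- it is quoted as a known result of Kahn from \cite{KAHN} and used only as motivation --- so there is no internal proof to measure your argument against. What you have written is, in outline, Kahn's own entropy proof: encode a uniform random independent set $I$ as $(X_A,X_B)$, bound $H(X_B\mid X_A)$ by subadditivity and $H(X_A)$ by Shearer's lemma applied to the $r$-fold cover $\{N(b_j)\}$ of $A$, and reduce everything to the per-vertex inequality $\tfrac1r H(X_{N(b_j)}) + H(X_{b_j}\mid X_{N(b_j)}) \le \tfrac1r\log i(K_{r,r})$. That skeleton is correct and is the standard route.

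The one place your sketch goes astray is the local step, which is exactly where you say the care is needed. Your target constant $\tfrac1r\log(2^r-1+2)$ is not $\tfrac1r\log i(K_{r,r})$: since $i(K_{r,r}) = 2^{r+1}-1 = (2^r-1)+2^r$, the second summand must be $2^r$, not $2$. Moreover, no realizability argument with ``virtual copies'' of $b_j$ or Lagrange multipliers is needed; the clean finish is as follows. Let $Q_j$ be the indicator of the event $N(b_j)\cap I=\emptyset$ and $q:=\P(Q_j=1)$, and write $H(q):=-q\log q-(1-q)\log(1-q)$. Then $H(X_{b_j}\mid X_{N(b_j)})\le H(X_{b_j}\mid Q_j)\le q\log 2$, because if some neighbor of $b_j$ is occupied then $b_j$ is forced out of $I$; and $H(X_{N(b_j)})=H(Q_j)+H(X_{N(b_j)}\mid Q_j)\le H(q)+(1-q)\log(2^r-1)$, because given $Q_j=1$ the trace on $N(b_j)$ is determined and given $Q_j=0$ it takes one of $2^r-1$ values. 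Hence $r$ times the per-vertex quantity is at most $H(q)+(1-q)\log(2^r-1)+q\log 2^r$, and the elementary inequality $H(q)+(1-q)\log a+q\log b\le\log(a+b)$ (concavity of $\log$) with $a=2^r-1$, $b=2^r$ gives exactly $\log(2^{r+1}-1)=\log i(K_{r,r})$. Summing over the $n/2$ vertices of $B$ yields $H(I)\le\frac{n}{2r}\log i(K_{r,r})$, as required. With that correction your argument is complete.
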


In \cite{ZHAO}, Zhao extended this result to all regular graphs as follows:

\begin{theorem}[Zhao]\label{thm:zzhow}
	For any $n,r\in\N$, if $G$ is an $n$-vertex, $r$-regular graph with $r\geq 1$, then
\[
		i(G)\leq i(K_{r,r})^{\frac{n}{2r}}.
\]
\end{theorem}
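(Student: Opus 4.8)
The plan is to deduce Theorem~\ref{thm:zzhow} from Theorem~\ref{thm:kahn} by passing to the \emph{bipartite double cover}. Given an $n$-vertex, $r$-regular graph $G$ with $r\ge 1$, form the graph $G\times K_2$ on the vertex set $V(G)\times\{0,1\}$ in which $(u,0)$ and $(v,1)$ are adjacent exactly when $uv\in E(G)$. This graph is bipartite, $r$-regular and has $2n$ vertices, so Theorem~\ref{thm:kahn} gives
\[
 i(G\times K_2)\le i(K_{r,r})^{\frac{2n}{2r}}=i(K_{r,r})^{\frac{n}{r}}.
\]
It therefore suffices to prove the inequality
\[
 i(G)^2\le i(G\times K_2),
\]
valid for \emph{every} graph $G$, since combining the two displays and taking square roots yields the theorem. (As a sanity check, if $G$ is already bipartite then $G\times K_2\cong G\sqcup G$ and the inequality is an equality.)

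To prove $i(G)^2\le i(G\times K_2)$ I would first describe both sides as families of pairs of subsets of $V:=V(G)$. An independent set of $G\times K_2$ is the same as a pair $(A,B)$ with $A,B\subseteq V$ such that $G$ has no edge between $A$ and $B$; call such a pair \emph{crossing-free}. So $i(G\times K_2)$ counts crossing-free pairs, while $i(G)^2$ counts pairs $(A,B)$ with both $A$ and $B$ independent in $G$, and I want an injection from the second family into the first. The basic structural observation is that if $A,B$ are independent then every vertex of $A\cap B$ is isolated in $G[A\cup B]$; hence $G[A\triangle B]$ is bipartite with parts $X:=A\setminus B$ and $Y:=B\setminus A$, and every $G$-edge with both ends in $A\cup B$ lies in it. The injection sends a pair of independent sets that is already crossing-free to itself, and sends any other pair of independent sets $(A,B)$ to the pair $(A',B')$ obtained by fixing once and for all a linear order $\prec$ on $V$ and, for each connected component $C$ of $G[A\triangle B]$, moving all of $C$ onto a single side: onto the $A$-side if the $\prec$-least vertex of $C$ lies in $A\setminus B$, and onto the $B$-side otherwise, while keeping the isolated part $A\cap B$ on both sides. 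Then $(A',B')$ is crossing-free because distinct components of $G[A\triangle B]$ span no $G$-edges, each component sits on one side, and $A\cap B$ is isolated in $G[A\cup B]$.

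The part I expect to need the most care is checking that this really is an injection. Disjointness of the two cases is the easy half: if $(A,B)$ was not crossing-free then some component $C$ of $G[A\triangle B]$ carries an edge, and after the move that edge lies wholly inside $A'$ or wholly inside $B'$, so $(A',B')$ is not a pair of two independent sets and cannot coincide with any image of the identity case. For invertibility of the non-trivial case one recovers $A\cap B=A'\cap B'$ and $A\triangle B=A'\triangle B'$ immediately, so it remains to recover the split of each component $C$ of $G[A\triangle B]$ into $X\cap C$ and $Y\cap C$; here the key point is that a connected bipartite graph has a unique bipartition $\{P_C,Q_C\}$, which one may name so that $\min_\prec C\in P_C$, and then the side of $(A',B')$ on which $C$ was placed tells us whether $X\cap C=P_C$ or $X\cap C=Q_C$, because $C$ landed on the $A$-side precisely when $\min_\prec C\in X$. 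Reassembling gives back $X$, hence $A$ and $B$. Everything else --- that the moved pair is consistently defined, and the bookkeeping with $A\cap B$ --- is routine once this picture is in place. I note that no degree hypothesis on $G$ is used in the lemma; regularity enters only through the application of Theorem~\ref{thm:kahn} to $G\times K_2$.
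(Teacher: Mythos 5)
Your proof is correct, but note that the paper does not actually prove Theorem~\ref{thm:zzhow}: it is quoted as a known result of Zhao, and what you have written is essentially a faithful reconstruction of Zhao's original argument --- reduce to the bipartite double cover $G\times K_2$, apply Theorem~\ref{thm:kahn} to it, and supply the key combinatorial lemma $i(G)^2\le i(G\times K_2)$ via an injection from pairs of independent sets to ``crossing-free'' pairs. All the delicate points check out: vertices of $A\cap B$ are indeed isolated in $G[A\cup B]$, so $G[A\triangle B]$ is bipartite with parts $A\setminus B$ and $B\setminus A$ and carries every edge of $G[A\cup B]$; the componentwise swap keyed to the $\prec$-least vertex produces a crossing-free pair; images of non-crossing-free pairs are never pairs of independent sets (since some component retains an edge and lands wholly inside $A'$ or $B'$), which separates the two cases; and the uniqueness of the bipartition of each connected component lets you invert the swap. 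The only material in the paper that touches this theorem is Theorem~\ref{justificationofgen}, which derives Zhao's bound by a different route --- conditionally on Conjecture~\ref{conj:our}, via the identity $P(2rG;x)=P(G;x)^{2r}$ for independence polynomials evaluated at $x=1$. That argument is shorter but rests on an unproven conjecture, whereas yours is unconditional modulo Kahn's theorem, which is exactly the dependence the authors acknowledge when they remark that ``the only known proof of Theorem~\ref{thm:zzhow} relies on Theorem~\ref{thm:kahn}.''
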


In \cite{GALV}, Galvin relaxed the maximum degree restriction and conjectured the following, which he proved asymptotically. 

\begin{conjecture}[Galvin]\label{conj:galvin}
	For any $n,\d\in\N$, if $G$ is an $n$-vertex graph with $\d(G)\geq\d$, such that $n\geq 2\de$, then
	\[
		i(G)\leq i(\K).
	\]
\end{conjecture}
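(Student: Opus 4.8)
The plan is to argue directly, splitting on the independence number $\alpha=\alpha(G)$. Since every vertex of $G$ has at least $\d$ neighbours and all of them lie outside any fixed maximum independent set, $\alpha\le n-\d$; in particular $i_t(G)=0$ for $t>n-\d$. The boundary case $\alpha=n-\d$ is then immediate: if $I$ is a maximum independent set then $|V(G)\setminus I|=\d$, so each $v\in I$, having at least $\d$ neighbours and all of them outside $I$, is joined to every vertex of $V(G)\setminus I$; hence $G$ contains $\K$ (with parts $I$ and $V(G)\setminus I$) as a spanning subgraph, and since adding edges cannot increase the number of independent sets, $i(G)\le i(\K)$. So from now on we may assume $\alpha\le n-\d-1$.

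For the remaining case I would use the enumeration method developed in this paper to bound each fixed-size count: for $1\le t\le n-\d-1$ it produces an explicit quantity $g(t;n,\d)$, depending only on $t$, $n$, $\d$ and on the hypothesis $\d(G)\ge\d$, with $i_t(G)\le g(t;n,\d)$. Summing and truncating the range at $t=n-\d-1$ (using $\alpha\le n-\d-1$) gives
\[
i(G)\ \le\ 1+\sum_{t=1}^{\,n-\d-1} g(t;n,\d),
\]
so it suffices to verify the purely numerical inequality $1+\sum_{t=1}^{n-\d-1}g(t;n,\d)\le 2^{\,n-\d}+2^{\d}-1=i(\K)$. The crudest usable choice, $g(t;n,\d)=\tfrac{n}{t}\binom{n-1-\d}{t-1}$ --- which follows from the identity $i_t(G)=\tfrac1t\sum_v i_{t-1}(G-N[v])$ together with $|N[v]|\ge\d+1$ --- already yields $i(G)\le 1+\tfrac{n}{n-\d}\bigl(2^{\,n-\d}-2\bigr)$; this lies below $i(\K)$ when $n$ is sufficiently close to $2\d$ (in particular for $n=2\d$, where $i(\K)=2^{\d+1}-1$), but it overshoots $i(\K)$ by a factor of roughly $\tfrac{n}{n-\d}$ and fails once $n$ is appreciably larger than $2\d$ --- already for $\d=1$, $n\ge5$. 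The substance of the argument is therefore to sharpen $g(t;n,\d)$, e.g.\ by following more than one vertex of the independent set at a time and by exploiting that each residual graph $G-N[v]$ again carries a degree restriction, until the summed bound drops below $i(\K)$.

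The hard part is the tightness of this final comparison. Since $i(\K)\approx 2^{\,n-\d}$ exceeds its dominant term by only a factor $1+2^{\,2\d-n}$, in the regime $n\approx 2\d$ the lower-order corrections in $g(t;n,\d)$ must be controlled essentially exactly, and it is precisely the finitely many small pairs $(n,\d)$ for which no clean closed-form bound can beat $i(\K)$ that have to be excluded from the general estimate and checked directly instead --- this is the source of the ``all but finitely many'' qualification. Two reductions help organize the argument: the monotonicity $i(K_{\d+1,\,n-\d-1})\le i(\K)$ for $n\ge2\d+1$ lets one assume $\d(G)=\d$ exactly away from a thin boundary, and Theorem~\ref{thm:zzhow} disposes of the $\d$-regular graphs when $n=2\d$ --- where $\K=K_{\d,\d}$ --- and serves as a consistency check more broadly.
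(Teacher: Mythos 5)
Your two opening reductions are sound: $\alpha(G)\le n-\d$ follows from the degree condition, and in the extremal case $\alpha=n-\d$ the graph contains $\K$ as a spanning subgraph, so $i(G)\le i(\K)$ there. But the rest is an outline with a genuine gap at its center: the entire content of the theorem is the existence of bounds $g(t;n,\d)$ with $1+\sum_{t=1}^{n-\d-1}g(t;n,\d)\le 2^{n-\d}+2^{\d}-1$, and you never produce them. The one explicit candidate you write down, $g(t;n,\d)=\tfrac{n}{t}\binom{n-1-\d}{t-1}$, you yourself concede overshoots $i(\K)$ by a factor of roughly $\tfrac{n}{n-\d}$ and already fails at $\d=1$, $n\ge5$; the set of pairs $(n,\d)$ where it fails is not finite but is essentially all of them, so the ``check finitely many exceptions directly'' endgame does not apply, and the promised sharpening (``following more than one vertex at a time'') is not an argument. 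Note also that a term-wise comparison $i_t(G)\le i_t(\K)$ cannot be the route either, since that inequality is false at $t=2$ for $n>2\d$ (as this paper points out for Conjecture~\ref{conj:gen}); any summed bound must absorb the $t=2$ excess $\tfrac{\d(n-2\d)}{2}$ using slack at other sizes, which your proposal does not address.

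For comparison: this paper does not prove Conjecture~\ref{conj:galvin} by any such summation. It records that the conjecture is an immediate consequence of the Cutler--Radcliffe theorem (Theorem~\ref{thm:cutrad}): writing $n=a(n-\d)+b$, the hypothesis $n\ge2\d$ forces $a=1$ and $b=\d$, so their bound reads $i(G)\le(2^{n-\d}-1)+2^{\d}=i(\K)$. The fixed-size machinery developed here (Theorem~\ref{lem:main} and Theorem~\ref{thm:mainresultofmindeg}) is aimed at the refinement $i_t(G)\le i_t(\K)$ for $t\ge3$, not at the total count, precisely because the total-count statement needs a structural argument (Cutler and Radcliffe's) rather than a size-by-size estimate.
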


Note the assumption that $n\geq 2\d$ is only to ensure that the proposed extremal graph satisfies the minimum degree restriction. This was proven for bipartite graphs, and in other special cases, by Alexander, Cutler, and Mink in \cite{BIPO}. This conjecture was recently resolved when  the following stronger version was proven by Cutler and Radcliffe in \cite{CRADFT}. 

\begin{theorem}[Cutler, Radcliffe]\label{thm:cutrad}
For any $n,\d\in\N$, write $n=a(n-\d)+b$ with $0\leq b<n-\d$. If $G$ is an $n$-vertex graph with $\d(G)\geq\d$, 
\[
i(G)\leq a(2^{n-\d}-1)+2^b. 
\]
\end{theorem}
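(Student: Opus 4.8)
The plan is to work in the complement. Write $H=\overline{G}$ and $D=n-1-\delta$, so that $\delta(G)\geq\delta$ is equivalent to $\Delta(H)\leq D$, and $i(G)$ equals the number $c(H)$ of complete subgraphs of $H$ (with the convention that the empty set and the singletons are counted). Put $m=D+1=n-\delta$, so that $n=am+b$ with $0\leq b<m$, and let $f(n)=a(2^{m}-1)+2^{b}$ be the target value; the conjectured extremal $H$ is a disjoint union of $a$ copies of $K_{m}$ and one copy of $K_{b}$, and one checks directly that this configuration has exactly $f(n)$ complete subgraphs, that $f(n)=f(n-m)+(2^{m}-1)$, and --- a point we will need repeatedly --- that $g:=f-1$ is superadditive: $g(x+y)\geq g(x)+g(y)$ for all integers $x,y\geq 0$. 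So the goal is to show $c(H)\leq f(n)$ whenever $\Delta(H)\leq D$.

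The main tool is the counting identity
\[
t\,k_{t}(H)=\sum_{v\in V(H)}k_{t-1}\bigl(H[N(v)]\bigr)\qquad(t\geq 1),
\]
where $k_{t}$ denotes the number of complete subgraphs of order $t$; it holds because each $K_{t}$ is counted once from each of its $t$ vertices. Combining it with the local bound $k_{t-1}(H[N(v)])\leq\binom{d(v)}{t-1}\leq\binom{D}{t-1}$ and the elementary identity $\tfrac1t\binom{D}{t-1}=\tfrac1{m}\binom{m}{t}$ gives $k_{t}(H)\leq\tfrac{n}{m}\binom{m}{t}$ for every $t\geq 2$; summing over $t\geq 0$ yields $c(H)\leq 1+\tfrac{n}{m}(2^{m}-1)$. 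When $m\mid n$ this is precisely $f(n)$, so in that case we are done. When $b\geq 1$ this estimate overshoots $f(n)$ by $\tfrac bm(2^{m}-1)+1-2^{b}>0$, and recovering the sharp ``$+2^{b}$'' term is the substance of the argument.

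To establish $c(H)\leq f(n)$ in general I would induct on $n$. If $H$ is disconnected with components $C_{1},\dots,C_{k}$ then $c(H)=\sum_{j}c(C_{j})-(k-1)$, so by the inductive hypothesis and superadditivity of $g$ one gets $c(H)-1=\sum_{j}\bigl(c(C_{j})-1\bigr)\leq\sum_{j}g(|C_{j}|)\leq g(n)=f(n)-1$; and if $H$ is connected with $n\leq m$ then $c(H)\leq 2^{n}$, with the needed equality case $H=K_{n}=K_{b}$. This leaves the one substantial case: $H$ connected on $n>m$ vertices. Here the idea is to exploit that the local estimate $k_{t-1}(H[N(v)])\leq\binom{D}{t-1}$ is far from tight --- equality for all $t$ would force $N[v]$ to be a clique $K_{m}$, hence, since $\Delta(H)\leq m-1$, a connected component, contradicting connectedness --- and then to quantify, for a well-chosen vertex $v$ (for instance one of maximum degree) or a small near-clique subgraph, the exact drop in both $n$ and the residue $b$ after deleting it, so that the inductive hypothesis applied to $H-v$ (or $H$ minus that subgraph) closes the gap.

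The hard part will be precisely this last case. The averaging bound above is tight only when $m\mid n$, so the $2^{b}$ correction cannot be extracted by a counting argument alone: it encodes the structural fact that the $b$ ``leftover'' vertices cannot all be absorbed into locally complete neighborhoods, and capturing this forces a somewhat delicate case analysis --- identifying which vertex or sub-clique to peel off so that $f(n)-f(n-c)$ dominates the number of complete subgraphs destroyed, and checking the boundary behavior as the residue wraps around from $0$ to $m-1$. This is where the real work of Cutler and Radcliffe lies; the reduction to connected graphs and the divisible case $m\mid n$ are comparatively routine.
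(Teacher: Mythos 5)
First, a point of orientation: this theorem is not proved in the paper at all --- it is quoted from Cutler and Radcliffe \cite{CRADFT} and used as a black box --- so there is no in-paper proof to compare yours against, and I am judging the proposal on its own merits.

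Your setup is correct and is the natural one: pass to $H=\overline{G}$ with $\Delta(H)\leq D=n-1-\delta$, count complete subgraphs, fix $m=D+1$, and prove $c(H)\leq f(|V(H)|)$ by induction. The identity $t\,k_t(H)=\sum_{v}k_{t-1}(H[N(v)])$, the resulting bound $k_t(H)\leq\frac{n}{m}\binom{m}{t}$ and hence $c(H)\leq 1+\frac{n}{m}(2^m-1)$ (which settles the case $m\mid n$), the superadditivity of $f-1$ and the consequent reduction to connected $H$, and the case $n\leq m$ are all verified correctly. But the theorem is, for all practical purposes, the statement about connected $H$ on more than $m$ vertices, and that case you do not prove: you only gesture at ``quantifying the drop'' after deleting a well-chosen vertex or near-clique. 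Worse, the specific move you suggest cannot close the induction as stated. If you delete a single vertex $v$, you must show that the number of complete subgraphs through $v$, which is $c(H[N(v)])\geq 1+d(v)$ and can be as large as $2^{d(v)}\leq 2^{m-1}$, is at most $f(n)-f(n-1)$, which equals $2^{b-1}$ when $b\geq 1$ and so can be as small as $1$; the same wrap-around of the residue $b$ defeats the naive version of deleting a closed neighbourhood. This missing case is precisely the part of the argument that carries the $2^b$ term, and it is where the actual content of Cutler and Radcliffe's theorem lies. What you have established is a correct proof when $(n-\delta)\mid n$, a correct reduction of the general case to connected graphs, and an honest acknowledgment that the remaining case --- which is the theorem --- is open in your write-up.
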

We notice that Conjecture~\ref{conj:galvin} does, in fact, follow from this, for when $n\geq 2\d$, Theorem~\ref{thm:cutrad} states that $i(G)\leq 2^{n-\d}+2^\d-1=i(\K)$. In \cite{GEN}, Engbers and Galvin proposed a generalization of Conjecture~\ref{conj:galvin} to independent sets of fixed sizes, which was the following. 

\begin{conjecture}[Engbers, Galvin]\label{conj:gen}
	For any $n,\d\in\N$, if $G$ is an $n$-vertex graph with $\d(G)\geq\d$, such that $n\geq 2\de$, then for all $t\geq 3$,
	\[
		i_t(G)\leq i_t(\K).
	\]
\end{conjecture}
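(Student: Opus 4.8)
The plan is to fix $t\geq 3$ and induct on $n$, aiming to prove the inequality for every $n$-vertex graph of minimum degree at least $\d$ with $n\geq 2\d$, at the cost of a bounded list of small exceptional graphs. Two observations frame things. Since a vertex of an independent set $I$ has its whole neighbourhood, of size $\geq\d$, outside $I$, we get $\alpha(G)\leq n-\d$, so $i_t(G)=0$ unless $3\leq t\leq n-\d$, which we may assume; and a direct count gives $i_t(\K)=\binom{\d}{t}+\binom{n-\d}{t}$, the quantity to beat.

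The inductive skeleton is the deletion identity at a minimum-degree vertex $v$ (so $\deg(v)=\d$): splitting the independent $t$-sets according to whether they contain $v$,
\[
	i_t(G)=i_t(G-v)+i_{t-1}\!\left(G-N[v]\right)\leq i_t(G-v)+\binom{n-1-\d}{t-1},
\]
since $|V(G)\setminus N[v]|=n-1-\d$. If some minimum-degree $v$ has all its neighbours of degree $>\d$, then $G-v$ still has minimum degree $\geq\d$ and $n-1\geq 2\d$ vertices, the inductive hypothesis gives $i_t(G-v)\leq\binom{\d}{t}+\binom{n-1-\d}{t}$, and Pascal's identity delivers exactly $i_t(G)\leq\binom{\d}{t}+\binom{n-\d}{t}$. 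More generally the bound follows, with room to spare, whenever some vertex (or small vertex set) can be deleted without pushing the minimum degree below $\d$. So all the difficulty sits in graphs whose degree-$\d$ vertices are self-supporting --- each has a neighbour of degree $\d$ --- the model case being $\d$-regular graphs.

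For those the crude estimate $i_{t-1}(H)\leq\binom{|V(H)|}{t-1}$ on the recursive term is hopelessly wasteful, and here I would bring in the method of the paper: write $i_t(G)$ as an inclusion--exclusion alternating sum of counts of small subgraphs of $G$, organised so that the minimum-degree and near-regularity constraints enter directly. Monotonicity of $i_t$ under edge deletion, fed back into the deletion identity, should reduce the general self-supporting case to a genuinely near-regular $G$; for such $G$ the edge deficit $\tfrac{n\d}{2}\binom{n-2}{t-2}$ --- asymptotically $\tfrac{t-1}{2}\geq 1$ times the gap $\binom{n}{t}-\binom{n-\d}{t}$ --- carries the bound as long as $\d$ is small compared with $n$. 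When $\d=\Theta(n)$, however, $\K$ is itself nearly regular and only barely extremal, and this is where I expect the real work --- and the finitely many exceptions --- to lie: the natural tool is a supersaturation/stability statement asserting that an $n$-vertex graph with maximum degree at most $n-1-\d$ whose complement has nearly $\binom{\d}{t}+\binom{n-\d}{t}$ cliques of size $t$ must be structurally close to $\K$, followed by an optimisation over the nearby configurations. The sizes $t=3,4$, where the alternating sum is short enough to evaluate outright, and the small values of $n$ --- notably the base cases near $n=2\d$, where the extremal graph changes shape and the inductive hypothesis is unavailable --- are to be settled by direct computation, and this is exactly where the bounded list of excluded graphs is pinned down. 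I expect the dense near-regular regime, together with the bookkeeping needed to corral the exceptional graphs, to be the main obstacle.
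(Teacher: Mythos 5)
There is a genuine gap, and it sits exactly where you say you expect ``the real work'' to lie. Your reduction is fine as far as it goes: the deletion identity $i_t(G)=i_t(G-v)+i_{t-1}(G-N[v])$ plus Pascal's rule does close the induction whenever some minimum-degree vertex has all neighbours of degree strictly greater than $\d$ (modulo the base case $n=2\d$, where the inductive hypothesis is unavailable). But the complementary case --- every degree-$\d$ vertex has a degree-$\d$ neighbour, the near-regular regime --- is the entire content of the problem, and your proposal handles it only by invoking an unproven supersaturation/stability statement and an unspecified ``optimisation over nearby configurations.'' Even your heuristic for small $\d$ (that the edge deficit $\frac{n\d}{2}\binom{n-2}{t-2}$ dominates the gap) is not a proof: the inclusion--exclusion expansion of $i_t(G)$ has terms of both signs (e.g.\ $N_3=p_3+2k_3\ge 0$), and truncating it does not yield a one-sided bound without controlling those higher-order terms. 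Note also that the statement you are proving is stated in the paper as a conjecture: the paper itself only establishes it under the additional hypothesis $n\ge\frac{(\d+1)(\d+2)}{3}$, and credits the full resolution to a later manuscript of Gan, Loh, and Sudakov --- so a short argument of the kind you sketch should not be expected to exist.

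It is also worth contrasting your inductive skeleton with the paper's, since the difference explains why the paper gets a clean partial result where your plan stalls. The paper splits on the maximum degree rather than on whether a vertex is removable. When $\D\le n-\d-1$ it proves the stronger bound $i_t\le\binom{n-\d}{t}$ by induction on $t$, with base case $t=3$ handled outright via the identity $i_3+k_3=\binom{n}{3}-(n-2)m+\sum_{v}\binom{d(v)}{2}$ (this is where the hypothesis $n\ge\frac{(\d+1)(\d+2)}{3}$ enters), and inductive step $i_s\le\frac{n-\d-s+1}{s}\,i_{s-1}$; no stability analysis of near-regular graphs is needed because one absorbs everything into the single crude bound $\binom{n-\d}{t}$, which already suffices in this case. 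When $\D\ge n-\d$ it deletes a \emph{maximum}-degree vertex from an edge-minimum-critical $G$ and inducts on $\d$, which is what makes the term $\binom{\d-1}{t-1}$ appear and Pascal close the estimate to $\binom{n-\d}{t}+\binom{\d}{t}$. If you want to salvage your write-up, replacing your ``self-supporting'' case by this $\D$-dichotomy, and your induction on $n$ by the paper's inductions on $t$ and on $\d$, is the concrete fix --- but it will buy you the theorem with the extra hypothesis on $n$, not the full conjecture.
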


Though the $t\geq 3$ condition may seem a bit unnatural, one may note that the conjecture, though true for $t\leq1$, fails for $t=2$. The smallest counterexample can be found when $n=4$ and $\d=1$, in which case $i_2(K_{1,3})< i_2(2K_{1,1})$ can easily be checked. Conjecture~\ref{conj:gen} was proven for bipartite graphs, and in the case that $n=2\d$ by Alexander, Cutler, and Mink in \cite{BIPO}. It was also proven for the cases $\d\leq 3$, and the cases $t\geq 2\d+1$ in \cite{GEN}. One of our main results is the following, Conjecture~\ref{conj:gen} when $n\geq\frac{(\d+1)(\d+2)}3$, the proof of which can be found in Section~\ref{sec:indmindeg}. Before stating this theorem, we note that progress on this conjecture was also made in \cite{MCLAW} by Law and McDiarmid using different methods, and that our result can be viewed as a generalization of this work done in \cite{MCLAW}. We also mention that this conjecture was resolved in \cite{POSHIN}, a manuscript by Gan, Loh, and Sudakov, after the submission of this paper. 

\begin{theorem}\label{thm:mainresultofmindeg}
	For any $n,\d\in\N$, if $G$ is an $n$-vertex graph with $\d(G)\geq\d$, and if $n\geq\frac{(\d+1)(\d+2)}{3}$, then for all $t\geq3$, 
	\[
		i_t(G)\leq i_t(\K).
	\]
\end{theorem}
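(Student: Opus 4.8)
The plan is to build on the elementary deletion identity
\[
i_t(G)=i_t(G-v)+i_{t-1}(G-N[v]),
\]
valid for every vertex $v$, where $G-N[v]$ denotes the subgraph induced on the non-neighbours of $v$: an independent $t$-set of $G$ either avoids $v$, or is $\{v\}$ together with an independent $(t-1)$-set of the anti-neighbourhood of $v$. Applying this along a vertex order $v_1,\dots,v_n$ — delete $v_1$, then $v_2$, and so on — gives, writing $G_j:=G-\{v_1,\dots,v_{j-1}\}$,
\[
i_t(G)=\sum_{j=1}^{n}i_{t-1}\!\left(G_j-N_{G_j}[v_j]\right)\ \le\ \sum_{j=1}^{n}\binom{a_j}{t-1},\qquad a_j:=|V(G_j)|-1-\deg_{G_j}(v_j),
\]
where $a_j$ is the size of the $j$-th anti-neighbourhood. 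Listing the two colour classes of $\K$ in turn yields the multiset $\{0,1,\dots,\delta-1\}\cup\{0,1,\dots,n-\delta-1\}$ of anti-neighbourhood sizes, and since every anti-neighbourhood arising there is edgeless the inequality is an equality, equal to $\binom{\delta}{t}+\binom{n-\delta}{t}=i_t(\K)$. So I would try to show that for an arbitrary $G$ with $\delta(G)\ge\delta$ one can choose the order to make $\sum_j\binom{a_j}{t-1}\le i_t(\K)$; as $x\mapsto\binom{x}{t-1}$ is convex and non-decreasing on the non-negative integers, this reduces to controlling how the largest $a_j$ are distributed, and — more seriously — to recovering the slack in $i_{t-1}(H)\le\binom{|H|}{t-1}$ whenever the anti-neighbourhood $H$ has edges.

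Two reductions, organised as an induction on $n$, should dispose of most graphs. If $\delta(G)\ge\delta+1$, delete any vertex $v$: then $G-v$ has $n-1$ vertices and minimum degree $\ge\delta$, $i_{t-1}(G-N[v])\le\binom{n-\delta-2}{t-1}\le\binom{n-\delta-1}{t-1}$ because $|V(G-N[v])|\le n-\delta-2$, and since $i_t(\K)-i_t(K_{\delta,n-1-\delta})=\binom{n-\delta-1}{t-1}$ by Pascal's identity, the inductive hypothesis applied to $G-v$ closes this case; so we may assume $\delta(G)=\delta$. Next, call $v$ \emph{removable} if $\delta(G-v)\ge\delta$, i.e.\ if no neighbour of $v$ has degree exactly $\delta$; a removable vertex cannot be universal (otherwise no vertex would have degree $\delta$), and for such $v$ the identity together with $i_{t-1}(G-N[v])\le\binom{n-1-\delta}{t-1}$ again gives $i_t(G)\le i_t(K_{\delta,n-1-\delta})+\binom{n-1-\delta}{t-1}=i_t(\K)$. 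The graphs left are therefore those in which $\delta(G)=\delta$ and every vertex has a neighbour of degree exactly $\delta$ — equivalently, the set $D$ of minimum-degree vertices is a total dominating set. The induction is anchored by the cases already known: $\delta\le 3$ and $t\ge 2\delta+1$ from \cite{GEN}, the bipartite case and $n=2\delta$ from \cite{BIPO}, the range of \cite{MCLAW}, and the finitely many smallest admissible values of $n$ for each $\delta$, which are checked directly.

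The crux — and, I expect, the real work — is this last case. When $D$ is total dominating, the estimate $i_{t-1}(H)\le\binom{|H|}{t-1}$ is genuinely wasteful: already for $G=\tfrac{n}{2}K_2$ (where $\delta=1$, a case covered by \cite{GEN}) no vertex order brings the crude sum down to $i_t(\K)$, because every minimum-degree vertex forces short cycles and dense neighbourhoods, which in turn force the anti-neighbourhood subgraphs $G_j-N_{G_j}[v_j]$ to carry many edges and hence far fewer than $\binom{a_j}{t-1}$ independent $(t-1)$-sets. The programme is thus to peel off closed neighbourhoods $N[v]$ of degree-$\delta$ vertices one at a time (each step removing exactly $\delta+1$ vertices), to charge the independent sets destroyed at each step against the edges lying inside the corresponding anti-neighbourhood, and to show that the cumulative saving always absorbs the surplus $\binom{\delta}{t}$ one is allowed beyond $\binom{n-\delta}{t}$; the number of peeling steps available is governed by $n/(\delta+1)$, and matching this against the saving one needs is what produces the hypothesis $n\ge\frac{(\delta+1)(\delta+2)}{3}\approx\delta^2/3$, essentially the number of vertices in $\sim\delta/3$ disjoint copies of $K_{\delta+1}$, the configurations extremal for this loss. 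The delicate point is to make the accounting uniform in $t\ge 3$; $t=3$ is the tightest, because there one cannot push the recursion down to the $t=2$ statement — which is false — and so must close the argument by a direct estimate in terms of the number of edges, paths of length two, and triangles of $G$.
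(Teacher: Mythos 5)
Your two reductions (strictly greater minimum degree, and the existence of a ``removable'' vertex) are correct as far as they go, but the proof is not complete: the entire difficulty has been deferred to the final case, where the degree-$\d$ vertices form a total dominating set, and there you offer only a ``programme'' of peeling closed neighbourhoods and charging lost independent sets against edges in anti-neighbourhoods, with no actual estimate. This is a genuine gap, not a routine verification. As your own example $G=\frac n2K_2$ shows, the crude bound $i_{t-1}(G_j-N_{G_j}[v_j])\le\binom{a_j}{t-1}$ cannot be rescued by a clever vertex order, and you never produce the quantitative lemma that converts ``edges inside anti-neighbourhoods'' into a saving large enough to absorb the surplus $\binom{\d}{t}$, nor do you show how the threshold $n\ge\frac{(\d+1)(\d+2)}3$ would emerge from such an accounting (your $K_{\d+1}$-packing heuristic is suggestive but is not an argument, and the claim that minimum-degree vertices ``force short cycles'' is false, e.g.\ for $\d$-regular graphs of large girth). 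A second, smaller gap: your induction on $n$ is anchored at ``the finitely many smallest admissible values of $n$ for each $\d$, checked directly,'' but that is one nontrivial base case for every $\d$ --- infinitely many statements in total --- none of which is actually verified.

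For comparison, the paper avoids this bottleneck entirely by splitting on the maximum degree rather than on removability. When $\D\le n-\d-1$ it proves the stronger bound $i_t(G)\le\binom{n-\d}{t}$ by induction on $t$: the base case $t=3$ follows from the exact Goodman-type identity $i_3+k_3=\binom n3-(n-2)m+\sum_{v}\binom{d(v)}2$ of Corollary~\ref{cor:ithree}, together with a convexity argument in the degrees, and it is precisely the resulting inequality $\binom n3-\frac{n\d(n-\d-1)}2\le\binom{n-\d}3$ that produces the hypothesis $n\ge\frac{(\d+1)(\d+2)}3$; the inductive step is the elementary counting bound $i_s\le\frac{n-\d-s+1}{s}\,i_{s-1}$. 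When $\D\ge n-\d$ it applies your deletion identity at a vertex of maximum degree and inducts on $\d$, using edge-minimum-criticality to drop the minimum degree. So the threshold you were hoping to extract from a delicate global charging argument actually comes from a single closed-form computation at $t=3$; if you want to salvage your approach, the missing ingredient is an analogue of that $i_3$ identity for the total-dominating case.
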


Next, we focus on a generalization of Theorem~\ref{thm:zzhow}. A number of different generalizations of  this theorem, as well as the weaker Theorem~\ref{thm:kahn}, have been proposed, some of which can be found in \cite{KAHN}, \cite{KAHNNEWK} and \cite{ZHAO}. Of the proposed generalizations of Theorem~\ref{thm:zzhow}, there has been one made, in \cite{KAHN}, to independent sets of fixed sizes (a generalization of Theorem~\ref{thm:zzhow} that can be thought of as being analogous to the Conjecture~\ref{conj:gen} generalization of Conjecture~\ref{conj:galvin}).  This conjecture, which we state in different language using the notation of \cite{FUNC}, is Conjecture~\ref{conj:our}. For a given power series $P(x)$, let $[x^t]P(x)$ denote the coefficient of $x^t$ in $P(x)$. 

\begin{conjecture}[Kahn] \label{conj:our}
For any $n,r\in\N$, if $G$ is an $n$-vertex, $r$-regular graph with $r\geq1$, then for all $0\leq t\leq n/2$,
\begin{equation}
i_t(G)\leq[x^t]\pr{2(1+x)^{r}-1}^{\frac{n}{2r}}, \label{eq:generalizedkahn}
\end{equation}
with equality holding if $2r$ divides $n$ and $G=\frac{n}{2r}K_{r,r}$. 
\end{conjecture}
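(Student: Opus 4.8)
The plan is to reduce \eqref{eq:generalizedkahn} to a comparison of small-subgraph counts. Expanding the indicator that a $t$-set $S\subseteq V(G)$ is independent as a product over the pairs inside $S$ and summing over $S$ gives the inclusion--exclusion identity
\[
i_t(G)=\sum_{H}(-1)^{e(H)}N(H,G)\binom{n-v(H)}{t-v(H)},
\]
where $H$ runs over isomorphism classes of graphs with no isolated vertex, $v(H)$ and $e(H)$ are its numbers of vertices and edges, $N(H,G)$ is the number of (not necessarily induced) copies of $H$ in $G$, and the binomial coefficient forces $v(H)\le t$, so for fixed $t$ the sum is finite. Running the same expansion on $I(K_{r,r},x)=2(1+x)^{r}-1$ and on its $\tfrac{n}{2r}$-th power shows that the right-hand side of \eqref{eq:generalizedkahn} is a polynomial in $n$ that equals $i_t\!\pr{\tfrac{n}{2r}K_{r,r}}$ whenever $2r\mid n$ and obeys the analogous identity with every $N(H,\cdot)$ replaced by the count of $H$ in a disjoint union of copies of $K_{r,r}$. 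Thus \eqref{eq:generalizedkahn} becomes the assertion that one explicit signed combination of subgraph counts dominates another.

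Second, I would use $r$-regularity to strip away almost all of these counts. For $r$-regular $G$ the number of homomorphisms of any forest $F$ into $G$ is exactly $n\,r^{e(F)}$; Möbius inversion between homomorphism and injective-homomorphism counts only introduces proper quotients of $F$, all on fewer vertices, so an induction on $v(F)$ shows that $N(F,G)$ equals a fixed polynomial in $n$ and $r$ plus an integer combination of counts $N(C,G)$ over graphs $C$ that contain a cycle (i.e.\ $e(C)\ge v(C)$) with $v(C)<v(F)$. Substituting this back and collecting terms gives, for every $r$-regular $G$ and fixed $t$,
\[
i_t(G)=A_t(n,r)+\sum_{C}b_C(n,r)\,N(C,G),
\]
where $A_t$ and the $b_C$ are explicit polynomials and $C$ ranges over the finitely many graphs with $v(C)\le t$ that contain a cycle (for $t\le 4$: $C_3$, $C_4$, the ``paw'' $C_3$-plus-pendant, $K_4-e$, $K_4$, the paw count being already expressible through $N(C_3,G)$ for regular $G$; for larger $t$ also $C_5$, the ``bowtie'', $K_{2,3}$, decorations of $K_4$, wheels, and so on). Since $G_0:=\tfrac{n}{2r}K_{r,r}$ is bipartite, $N(C,G_0)=0$ for every non-bipartite $C$, so
\[
i_t(G_0)-i_t(G)=\sum_{C\text{ bipartite}}b_C\brac{N(C,G_0)-N(C,G)}-\sum_{C\text{ non-bipartite}}b_C\,N(C,G).
\]

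Third come the substantive estimates; here the method yields the conjecture exactly when the surviving list of $C$ is short or is pinned down by extra structure. The model bound concerns $C_4$: every $r$-regular $G$ has $n\binom{r}{2}$ copies of $P_3$, hence $\sum_{\{u,v\}}c(u,v)=n\binom{r}{2}$ where $c(u,v)$ is the codegree, and since $N(C_4,G)=\tfrac12\sum_{\{u,v\}}\binom{c(u,v)}{2}$ with each $c(u,v)\le r$, convexity of $\binom{\cdot}{2}$ gives $N(C_4,G)\le\tfrac{n(r-1)}{4}\binom{r}{2}=N(C_4,G_0)$; the same argument bounds $N(K_{2,3},G)$ and, more generally, every complete-bipartite and even-cycle subgraph count by its value at $G_0$. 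For $t=3$ the only cyclic graph is $C_3$, with a negative coefficient, so $i_3(G)\le i_3(G_0)$ is immediate; for $t=4$ one gets the exact identity $i_4(G)=A_4(n,r)+(3r-n)N(C_3,G)+N(C_4,G)-N(K_4-e,G)+N(K_4,G)$, and then the $C_4$ bound, the elementary inequality $N(K_4-e,G)\ge 6\,N(K_4,G)$, and (when $n\ge 3r$) the nonnegativity of the $C_3$ term finish the case, while the complementary range $n<3r$ is handled separately --- when $2r\mid n$ it is just the $n=2\d$ instance of Conjecture~\ref{conj:gen} proved in \cite{BIPO}. For Moore graphs and other regular graphs with a girth or spectral constraint, all the relevant $N(C,G)$ are either zero or determined exactly, so one simply evaluates $A_t(n,r)$ plus those constants and compares with the explicitly known polynomial on the right of \eqref{eq:generalizedkahn}.

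The main obstacle is uniformity in $t$. As $t$ grows, both the number of cyclic $C$ with $v(C)\le t$ and --- crucially --- the number of them whose coefficient $b_C$ has the ``wrong'' sign proliferate: a positive $b_C$ at a bipartite $C$ demands an extremal bound $N(C,G)\le N(C,G_0)$, a negative $b_C$ at a bipartite $C$ would demand the reverse (false in general), and a positive $b_C$ at a non-bipartite $C$ (e.g.\ the bowtie, already at $t=5$) must be absorbed by negative-coefficient terms, the main reservoir being the $N(C_3,G)$-term, whose coefficient is a degree-$(t-2)$ polynomial in $n$ with negative leading coefficient. What one really wants is a single extremal principle --- that $\tfrac{n}{2r}K_{r,r}$ simultaneously maximizes $N(H,\cdot)$ over all bipartite $H$ while annihilating every non-bipartite $H$ --- together with a bookkeeping lemma guaranteeing that the signed sum of these bounds has the correct sign; absent that, the method is confined to small $t$, to $n$ large relative to $r$, or to graphs whose cyclic counts are rigid. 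A secondary technical point is that when $2r\nmid n$ the right-hand side of \eqref{eq:generalizedkahn} is only a formal interpolation, so the target counts become rational and one must check that the convexity estimates still dominate them.
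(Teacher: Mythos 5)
This statement is a \emph{conjecture}, and your proposal does not prove it. The only arguments you carry to completion are for $t\le 3$ and (partially) $t=4$; for general $t$ you candidly concede that the method is ``confined to small $t$,'' and that concession is precisely the gap. Your reduction writes $i_t\pr{\tfrac{n}{2r}K_{r,r}}-i_t(G)$ as a signed combination of subgraph-count differences with coefficients $b_C(n,r)$ that you cannot control: a bipartite $C$ with a negative coefficient, or a non-bipartite $C$ with a positive coefficient that is not absorbed by the $N(C_3,G)$ reservoir, kills the argument, and you exhibit no mechanism ruling these out. Saying that ``what one really wants is a single extremal principle'' restates the difficulty rather than resolving it. For calibration, the paper itself proves only the cases $t\le 3$, and $t=4$ for triangle-free $G$ (Theorem~\ref{thm:thelast}), and explicitly notes that essentially no other cases of Conjecture~\ref{conj:our} are known; your proposal lands in the same place.

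For the cases you do handle, your route is essentially the paper's: your opening inclusion--exclusion identity is Theorem~\ref{lem:main} in the language of signed spanning-subgraph counts rather than signed edge-covers $\psi(J)$ of induced subgraphs; your specialization to regular graphs is Corollary~\ref{lem:reg}; and your polynomial-interpolation argument identifying $[x^t]\pr{2(1+x)^{r}-1}^{n/2r}$ with $a_t(n)$ via agreement at all $n\in 2r\Z$ is the same device that closes the paper's proof of Theorem~\ref{thm:thelast}. The one genuine difference is at $t=4$: you bound $N(C_4,G)$ globally by convexity of codegrees (using $\sum_{\{u,v\}}c(u,v)=n\binom{r}{2}$ and $c(u,v)\le r$), whereas the paper bounds $k_2\uplus k_2(e)-c_4(e)$ locally, edge by edge. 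Your version is clean and correct as far as it goes, but your attempt to drop the triangle-free hypothesis at $t=4$ is incomplete: the identity for $i_4(G)$ with coefficient $3r-n$ on $N(C_3,G)$ is asserted rather than derived, and the range $2r<n<3r$ with $2r\nmid n$ is covered by neither of your two subcases. Finally, the equality analysis (that equality forces $G=\tfrac{n}{2r}K_{r,r}$) is not addressed at all for general $t$.
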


In Section~\ref{sec:bipartite}, we confirm that this is, in fact, a generalization of Theorem~\ref{thm:zzhow} (and therefore of Theorem~\ref{thm:kahn}),  i.e., that if Conjecture~\ref{conj:our} holds, then so does Theorem~\ref{thm:zzhow}, which was not previously shown. In particular, we prove the following. 

\begin{theorem}\label{justificationofgen}
If Conjecture~\ref{conj:our} holds, then so must Theorem~\ref{thm:zzhow}. 
\end{theorem}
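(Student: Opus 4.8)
The plan is to deduce Theorem~\ref{thm:zzhow} from Conjecture~\ref{conj:our} by first settling the case $2r\mid n$, where the exponent $n/(2r)$ is an integer, and then reducing the general case to it by passing to disjoint unions.

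The key observation is that when $m:=n/(2r)$ is a positive integer, the right-hand side of \eqref{eq:generalizedkahn} is itself a sequence of independent-set counts. Indeed, an independent set of $K_{r,r}$ is a subset of one of the two sides, so the independence polynomial of $K_{r,r}$ is $\sum_t i_t(K_{r,r})x^t=(1+x)^r+(1+x)^r-1=2(1+x)^r-1$; since independence polynomials multiply over disjoint unions, this gives $[x^t]\pr{2(1+x)^r-1}^m=i_t(mK_{r,r})$ for every $t$. Thus, in the case $2r\mid n$, Conjecture~\ref{conj:our} asserts precisely that $i_t(G)\le i_t(mK_{r,r})$ for all $0\le t\le n/2$.

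Next I would record the elementary fact that any $r$-regular graph $H$ on $N$ vertices with $r\ge1$ has $\alpha(H)\le N/2$: if $S$ is independent, then counting the $r\abs S$ edges leaving $S$ (all of which land in $V(H)\setminus S$) against the at most $r$ of them meeting each vertex of $V(H)\setminus S$ gives $r\abs S\le r(N-\abs S)$. Applying this to $G$ and to $mK_{r,r}$ (both $r$-regular on $n$ vertices) shows $i_t(G)=0=i_t(mK_{r,r})$ for $t>n/2$, so summing the inequalities $i_t(G)\le i_t(mK_{r,r})$ over $0\le t\le n/2$ yields
\[
i(G)=\sum_{t=0}^{\lfloor n/2\rfloor}i_t(G)\le\sum_{t=0}^{\lfloor n/2\rfloor}i_t(mK_{r,r})=i(mK_{r,r})=i(K_{r,r})^{m},
\]
which is Theorem~\ref{thm:zzhow} in the case $2r\mid n$.

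For arbitrary $n$, let $G$ be an $n$-vertex $r$-regular graph with $r\ge1$, pick any positive integer $\ell$ with $2r\mid\ell n$ (for instance $\ell=2r$), and let $H=\ell G$ be the disjoint union of $\ell$ copies of $G$. Then $H$ is $r$-regular on $\ell n$ vertices with $2r\mid\ell n$, so the case already handled gives $i(H)\le i(K_{r,r})^{\ell n/(2r)}$; since $i(H)=i(G)^{\ell}$ and $i(K_{r,r})^{\ell n/(2r)}=\bigl(i(K_{r,r})^{n/(2r)}\bigr)^{\ell}$, taking $\ell$-th roots of both nonnegative sides gives $i(G)\le i(K_{r,r})^{n/(2r)}$. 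I expect this last reduction to be the only real content of the argument: it is what lets us avoid "plugging $x=1$" into $\pr{2(1+x)^r-1}^{n/(2r)}$ when $n/(2r)\notin\Z$, which is illegitimate since that power series has radius of convergence $1-2^{-1/r}<1$.
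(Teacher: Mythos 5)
Your proof is correct and follows essentially the same route as the paper: both arguments hinge on the identity $[x^t]\pr{2(1+x)^r-1}^{m}=i_t(mK_{r,r})$ and on multiplicativity of independent-set counts over disjoint unions, passing to a disjoint union of copies of $G$ (the paper uses $2rG$ directly, you use $\ell G$ after first isolating the case $2r\mid n$) to make the exponent an integer. Your explicit check that the coefficients vanish for $t>n/2$, and your taking of $\ell$-th roots rather than evaluating a functional inequality at $x=1$, are minor presentational differences, not a different method.
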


We note that our proof of this theorem will be a short, combinatorial argument, whereas the only known proof of Theorem~\ref{thm:zzhow} relies on Theorem~\ref{thm:kahn}, the proof of which uses probabilistic methods and requires additional development. We also prove the following, which verifies the conjecture for all $t\leq3$, and for $t=4$ when $G$ is assumed triangle-free. We note that essentially no cases of this conjecture have been proven previously. 

\begin{theorem}\label{thm:thelast}
For any $n,r\in\N$, if $G$ is an $n$-vertex, $r$-regular graph with $r\geq1$, then $G$ satisfies \t{(\ref{eq:generalizedkahn})} for all $t\leq3$, and for $t=4$ when $G$ is assumed triangle-free, with equality holding if and only if $2r$ divides $n$ and $G=\frac{n}{2r}K_{r,r}$. 
\end{theorem}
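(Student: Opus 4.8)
The plan is to obtain, for each $t\le 4$, a closed formula for $i_t(G)$ and to match it term by term with the right-hand coefficient $R_t:=[x^t]\pr{2(1+x)^r-1}^{n/(2r)}$. First I would expand the right-hand side by the generalized binomial theorem: writing $2(1+x)^r-1=1+g(x)$ with $g(x)=\sum_{k\ge 1}2\binom rk x^k$, we have $R_t=\sum_{j\ge 0}\binom{n/(2r)}{j}[x^t]\!\left(g(x)^j\right)$, which for each fixed $t\le 4$ is an explicit polynomial in $n$ (with $r$ a parameter); in particular $R_0=1$, $R_1=n$, $R_2=\binom n2-\tfrac{nr}2$, and $R_3$, $R_4$ are the degree-$3$ and degree-$4$ polynomials one reads off. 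For the left-hand side I would use, on every $t$-subset $S$ of $V(G)$, the sieve identity $\mathbf 1\{e(G[S])=0\}=\sum_{j\ge 0}(-1)^j\binom{e(G[S])}{j}$, so that $i_t(G)=\sum_{j\ge 0}(-1)^j N_j$ where $N_j:=\sum_{|S|=t}\binom{e(G[S])}{j}=\#\{(F,S):F\subseteq E(G),\ |F|=j,\ |S|=t,\ V(F)\subseteq S\}$; each $N_j$ is then a sum of counts of small subgraphs of $G$. For $t\le 2$ only $n$, $r$, and $e(G)=nr/2$ enter and one gets $i_t(G)=R_t$ outright. For $t=3$ one finds $N_1=e(G)(n-2)$, $N_2=n\binom r2$ (only the paths $P_3$ survive), and $N_3$ equals the number $\triangle(G)$ of triangles, giving $i_3(G)=\binom n3-e(G)(n-2)+n\binom r2-\triangle(G)$; comparing with the expansion yields the clean identity $i_3(G)=R_3-\triangle(G)$, so $i_3(G)\le R_3$ with equality exactly when $G$ is triangle-free.

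The main case is $t=4$ with $G$ triangle-free. Then $e(G[S])\le 4$ for every $4$-set $S$ (the paw, the diamond and $K_4$ all contain triangles), so $i_4(G)=N_0-N_1+N_2-N_3+N_4$. Here $N_0=\binom n4$; $N_1=e(G)\binom{n-2}2$; $N_2=\binom{e(G)}2+(n-4)\,n\binom r2$, obtained by sorting pairs of edges into disjoint pairs and $P_3$'s; $N_3=e(G)(r-1)^2+n\binom r3$, which counts $P_4$- and $K_{1,3}$-subgraphs and where triangle-freeness is exactly what makes the $P_4$-count equal to $e(G)(r-1)^2$ (the two ``arms'' of a $3$-edge path being then automatically vertex-disjoint); and $N_4=q(G)$, the number of $4$-cycles of $G$, since $C_4$ is the only triangle-free $4$-vertex graph with $4$ edges. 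Hence $i_4(G)=P(n,r)+q(G)$ for an explicit polynomial $P$. To close the estimate I would bound $q(G)$: with $c(u,v):=|N(u)\cap N(v)|$, a $4$-cycle is a pair of opposite vertices together with a $2$-subset of their common neighbourhood, so $q(G)=\tfrac12\sum_{\{u,v\}}\binom{c(u,v)}2=\tfrac14\!\left(\sum_{\{u,v\}}c(u,v)^2-\sum_{\{u,v\}}c(u,v)\right)$; since $\sum_{\{u,v\}}c(u,v)=n\binom r2$ (count pairs by their common neighbours) and $c(u,v)\le r$, this gives $q(G)\le\tfrac14\!\left(r\cdot n\binom r2-n\binom r2\right)=\tfrac{nr(r-1)^2}8$. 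Finally one checks $P(n,r)+\tfrac{nr(r-1)^2}8=R_4$, most painlessly by noting that both sides are polynomials in $n$ that agree at every $n\in 2r\N$, where each equals $i_4\!\left(\tfrac n{2r}K_{r,r}\right)$ by multiplicativity of the independence polynomial over disjoint unions; therefore $i_4(G)\le R_4$.

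For the equality clause, equality throughout (in the $t\le 3$ bounds, and in the $t=4$ bound when $G$ is triangle-free) forces $\triangle(G)=0$ from the $t=3$ identity, and then $c(u,v)^2=r\,c(u,v)$, i.e.\ $c(u,v)\in\{0,r\}$, for every pair $\{u,v\}$ from the $t=4$ estimate. A short structural argument finishes: for any vertex $v$, any two vertices of $N(v)$ have the common neighbour $v$, hence codegree $r$, hence the same $r$-element neighbourhood; iterating this shows that $N(v)$ together with the common neighbourhood of its vertices spans a component isomorphic to $K_{r,r}$. Thus $G=\tfrac n{2r}K_{r,r}$ and in particular $2r\mid n$; conversely this graph attains equality in \t{(\ref{eq:generalizedkahn})} for every $t$, once more by multiplicativity over disjoint unions.

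I expect the $t=4$ bookkeeping of the second paragraph to be the main obstacle: correctly enumerating the $4$-vertex subgraph types under triangle-freeness and evaluating $N_3$ and $N_4$ — it is precisely the triangle-free hypothesis that removes the paw, diamond and $K_4$ contributions and collapses all the relevant subgraph counts to functions of $n$, $r$ and $q(G)$. The polynomial identity $R_4=P(n,r)+\tfrac{nr(r-1)^2}8$ is the other point needing care, though the ``agreement on $n\in 2r\N$'' observation reduces it to computations already in hand. The remaining ingredients — the $R_t$ expansions and the $t\le 3$ identities — are routine.
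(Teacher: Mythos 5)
Your proof is correct, and while it shares the paper's overall skeleton --- an inclusion--exclusion reduction of $i_t(G)$ to counts of small subgraphs, followed by matching the resulting polynomial in $n$ with $[x^t](2(1+x)^r-1)^{n/2r}$ via agreement at all $n\in 2r\N$ --- your treatment of the critical $t=4$ term is genuinely different. The paper works from Corollary~\ref{lem:reg}(\ref{partb}), so the quantity it must bound is $k_2\uplus k_2(G)-c_4(G)$, which it controls by a per-edge argument about edges meeting $N(u)\cup N(v)$; you instead organize the sieve by edge subsets inside each $4$-set, which under triangle-freeness collapses everything except the single term $+c_4(G)$ into an explicit polynomial in $n$ and $r$, and then you bound the number of $4$-cycles by the codegree computation $c_4(G)=\tfrac12\sum_{\{u,v\}}\binom{|N(u)\cap N(v)|}{2}$ together with $\sum_{\{u,v\}}|N(u)\cap N(v)|=n\binom r2$ and $|N(u)\cap N(v)|\le r$. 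I checked that your bound $c_4(G)\le nr(r-1)^2/8$ yields exactly the same numerical upper bound as the paper's inequality on $k_2\uplus k_2-c_4$ (the two routes are reconciled by the identity $p_4^{\mathrm{ind}}=m(r-1)^2-4c_4$ counting induced $P_4$'s), so nothing is lost; what you gain is a cleaner equality analysis, since equality forces $|N(u)\cap N(v)|\in\{0,r\}$ for every pair, and the disjoint-union-of-$K_{r,r}$ structure then follows from your short codegree argument, whereas the paper deduces $G[N[e]]\cong K_{r,r}$ for every edge $e$. One caveat you share with the paper: the ``equality iff $G=\frac{n}{2r}K_{r,r}$'' clause is really established only for the $t=4$ bound, since your own identity $i_3(G)=R_3-k_3(G)$ shows that every triangle-free $r$-regular graph (e.g.\ $C_5$ for $r=2$) already attains equality at $t=3$.
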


All proofs of these results, and of additional results presented later, rely on new methods that we develop for independent set enumeration in Section~\ref{sec:proof}, which were inspired by Whitney's Broken Circuit theorem, which was first stated and proven in \cite{WHIT}. These methods differ very much from those previously used to study the independent set structure of a graph, but other methods inspired by Whitney's broken circuit theorem have been used to count graph colorings and graph homomorphisms (which are related structures) in \cite{SOLGAV}. For these new methods and their proofs, we must develop some notation. For this, we mainly follow \cite{WEST}. 

For a graph $G$ and $v\in V(G)$, let $N_G[v]:=N_G(v)\cup\{v\}$. For any $J\sub V(G)$ and $S\sub E(G)$, let $G[J]$ denote the subgraph of $G$ with vertex set $J$ and edge set $\{vw\in E(G):v,w\in J\}$, the so-called \emph{subgraph of $G$ induced by $J$}, and let $G[S]$ denote the subgraph of $G$ with edge set $S$, and vertex set consisting of all vertices which are incident to edges in $S$. For any graph $H$, let 
\begin{equation}
H(G):=\{J\sub V(G):G[J]\cong H\},\label{setsofsubgraphsofgraph}
\end{equation}
e.g., $C_5(G)$ be the set of $J\sub V(G)$ with $G[J]$ a pentagon. Let lowercase letters denote the sizes of these sets; for example, let $c_5(G)$ be defined as $|C_5(G)|$, the number of subsets $J\sub V(G)$ such that $G[J]$ is a pentagon, and let $k_3(G)$ be defined as $|K_3(G)|$, the number of $J\sub V(G)$ such that $G[J]$ is a triangle. As is common, for each $t\in\Z_{\geq0}$, let $I_t$ denote the graph isomorphic to $\cl{K_t}$, and let $P_t$ denote the path on $t$ vertices. 

Let $G\uplus H$ denote the graph consisting of two components, $G_1$ and $G_2$, with $G_1\iso G$ and $G_2\iso H$. We note that it is standard to denote this by $G+H$ rather than $G\uplus H$, but that we avoid this, as it causes ambiguity when using lowercase letters to denote the sizes of these sets (see, for example, the statement of Corollary~\ref{cor:ithree}(\ref{cor:ithree:b}), which would not have clear meaning with the traditional notation). Let $H\leq G$ mean that $H$ is a subgraph of $G$. For any $J\sub V(G)$, we call a set of edges $S\sub E(G[J])$ an \emph{edge-covering} of $G[J]$ if every vertex of $V(G[J])$ is incident to at least one edge of $S$. 

Our new methods center around a formula which we develop, which enumerates the number of independent sets of a fixed size in a general graph in terms of the number of edge-coverings of certain subgraphs. For this, we define function $\psi:V(G)\rightarrow\Z_{\geq0}$ as follows. For every $J\sub V(G)$, let $\psi_e(J)$ denote the number of edge-coverings of $G[J]$ by an even number of edges of $E(G[J])$, $\psi_o(J)$ denote the number of edge-coverings of $G[J]$ by an odd number of edges of $E(G[J])$, and $\psi(J):=\psi_e(J)-\psi_o(J)$. The following observation will be helpful. 

\begin{remark}\label{rem}
Observe that for any graph $G$, if $G[J]$ contains any isolated vertices, then $\psi(J)=0$, but the converse need not hold \t{(}one can easily check, for example, that if $G[J]\iso P_4$, then $\psi(J)=0$\t{)}.
\end{remark}

For ease of notation, for any $j\in\Z_{\geq0}$ and any set $A$, let $A^{(j)}:=\{B\sub A:|B|=j\}$, as opposed to the more common $\binom{A}{j}$, as in \cite{BELACOMBO}. We have now developed the necessary notation for our new methods, which are the topic of Section~\ref{sec:proof}. Once these methods are developed, we use them to prove Theorem~\ref{thm:mainresultofmindeg} and some corollaries in Section~\ref{sec:indmindeg}, then to discuss Conjecture~\ref{conj:our} and prove our results related this conjecture in Section~\ref{sec:bipartite}, after which we use our methods to study the independent set structure of certain regular graphs in Section~\ref{sec:regular}. 

\section{Our main result and immediate consequences} 
\label{sec:proof}

In \cite{WHIT}, Whitney used a combinatorial sieve to develop a formula for counting the number of colorings of a general graph $G$ of a fixed size in terms of broken circuits, this being his well-known Broken Circuit theorem. This provided a method for counting the number of ways that a graph can be partitioned into a fixed number of independent sets, a global property about the independent set structure of the graph, in terms of the structure of certain subgraphs, a more convenient local property. Inspired by his methods, we use a similar sieve method to develop a formula which enumerates the number of independent sets of a fixed size in a general graph, an oftentimes difficult to bound global property, in terms of the number of vertex covers of certain subgraphs. This formula is the following.

\begin{theorem}\label{lem:main}
For any $n\in\N$, any $n$-vertex graph $G$, and for any nonnegative integer $t$,
\begin{equation}
i_t(G)=\sum_{j=0}^t\binom{n-j}{t-j}N_j(G),\label{newform}
\end{equation}
where $\dis N_j(G):=\sum_{J\in V^{(j)}}\psi(J)$ for $j\geq1$, and $N_0(G):=1$. 
\end{theorem}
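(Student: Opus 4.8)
The plan is to establish \eqref{newform} by a sieve (inclusion–exclusion) argument on edge sets, following the spirit of Whitney's broken circuit theorem. The starting point is the observation that for a fixed $t$-element set $T \subseteq V(G)$, the indicator that $T$ is independent can be written via inclusion–exclusion over the edges lying inside $T$: $T$ is independent precisely when $E(G[T]) = \emptyset$, and the standard identity gives
\[
\I[T \text{ independent}] \;=\; \sum_{S \subseteq E(G[T])} (-1)^{|S|}.
\]
Summing over all $T \in V^{(t)}$ yields $i_t(G) = \sum_{T \in V^{(t)}} \sum_{S \subseteq E(G[T])} (-1)^{|S|}$. The idea is then to switch the order of summation: instead of first choosing $T$ and then an edge subset $S$ inside it, first choose a set $S$ of edges, and then count the $t$-sets $T$ that contain all endpoints of $S$ (i.e. with $V(G[S]) \subseteq T$) and in which $S$ actually lies.

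First I would make the change of order precise. For each edge set $S$, the set of $t$-sets $T$ with $S \subseteq E(G[T])$ is exactly the set of $t$-sets $T$ containing $V(G[S])$, of which there are $\binom{n - |V(G[S])|}{t - |V(G[S])|}$. This already gives a formula, but it is indexed by all edge subsets of $G$, not by vertex subsets; the next step is to organize the sum by grouping edge sets $S$ according to the vertex set they cover. Writing $J := V(G[S])$ and $j := |J|$, the inner contribution becomes $\binom{n-j}{t-j} \sum_{S} (-1)^{|S|}$, where the sum is over all $S \subseteq E(G[J])$ with $V(G[S]) = J$ — that is, over all \emph{edge-coverings} of $G[J]$ in the sense defined above. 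By the definition of $\psi$, $\sum_{S \text{ edge-covering of } G[J]} (-1)^{|S|} = \psi_e(J) - \psi_o(J) = \psi(J)$. Collecting terms over all $J$ of a given size $j$ gives $\sum_{J \in V^{(j)}} \psi(J) = N_j(G)$, and summing over $j$ from $0$ to $t$ produces \eqref{newform}. The $j=0$ term corresponds to $S = \emptyset$ (with $G[S]$ the empty graph, so $J = \emptyset$), contributing $\binom{n}{t} \cdot 1 = \binom{n}{t} N_0(G)$, which matches the convention $N_0(G) := 1$; note that by Remark~\ref{rem} only sets $J$ with $G[J]$ having no isolated vertices contribute to $N_j(G)$, so the sum is genuinely finite and well-behaved.

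The step that requires the most care — and which I expect to be the main obstacle to a clean write-up — is the bookkeeping in the interchange of summations: one must check that every pair $(T, S)$ with $T \in V^{(t)}$ and $S \subseteq E(G[T])$ is counted exactly once when reindexed as "(choice of $J$ with $|J| \le t$, choice of edge-covering $S$ of $G[J]$, choice of $t$-set $T \supseteq J$)", and in particular that the map $S \mapsto (V(G[S]), S, T)$ is a bijection onto the reindexed set. The subtlety is that $S = \emptyset$ must be handled as the degenerate case $J = \emptyset$ (it covers no vertices), and that for $S \neq \emptyset$ the vertex set $J = V(G[S])$ has no isolated vertices by construction, so it is automatically a legitimate index. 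Once this bijection is verified, the rest is the substitution of $\psi(J)$ for the signed count of edge-coverings and the routine regrouping by $|J| = j$, with no convergence issues since all sums are finite. I would also remark that the binomial coefficient $\binom{n-j}{t-j}$ is interpreted as $0$ when $j > t$, which is why the outer sum can be truncated at $j = t$.
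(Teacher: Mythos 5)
Your proposal is correct and is essentially the paper's own argument: both are the inclusion--exclusion sieve over edge subsets, regrouped according to the vertex set $J=V(G[S])$ covered, with the signed count of edge-coverings of $G[J]$ recognized as $\psi(J)$. The only difference is presentational --- the paper starts from the sieve on the events $A_e=\{A\in V^{(t)}:e\sub A\}$ and then groups terms, while you sum the local identity $\sum_{S\sub E(G[T])}(-1)^{|S|}$ over $T$ and interchange the order of summation; these are the same computation.
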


Although the language and motivations are different, it can be shown that this formula is equivalent to a result in \cite{PHYSICS}. With Remark~\ref{rem} in mind, one can quickly check that $N_1(G)=0$, $N_2(G)=-|E(G)|$ and $N_3(G)=p_3(G)+2\cdot k_3(G)$ for any graph $G$. Moreover, if we let $R_{4,1}$ denote the graph obtained by removing one edge from a copy of $K_4$, and $R_{4,2}$ denote the only graph not isomorphic to a quadrilateral that can obtained by removing two edges from a copy of a $K_4$, then the seven non-isomorphic graphs on four vertices without isolated vertices are $K_{1,3}$, $K_2\uplus K_2$, $P_4$, $C_4$, $R_{4,2}$, $R_{4,1}$ and $K_4$, and it is straightforward to check that if $J\in V^{(4)}$ is isomorphic to $K_{1,3}$, $K_2\uplus K_2$, $P_4$, $C_4$, $R_{4,2}$, $R_{4,1}$ or $K_4$, then $\psi(J)$ is equal to $-1$, $1$, $0$, $-1$, $-1$, $-2$ or $-3$, respectively. Thus, by Remark~\ref{rem}, 
\begin{equation}
N_4(G)=k_2\uplus k_2(G)-k_{1,3}(G)-r_{4,2}(G)-c_4(G)-2r_{4,1}(G)-3k_4(G). \label{eq:theonethatfixesit}
\end{equation}

\begin{proof}[Proof of Theorem~\ref{lem:main}]
Let $m:=|E(G)|$. For each $e\in E(G)$, let $A_e:=\{A\in V^{(t)}:e\sub A\}$. Then, $i_t(G)=\abs{\cap_{e\in E}\cl{A_e}}$, where the complement is taken within $V^{(t)}$. By inclusion-exclusion,
\begin{equation}
i_t(G)=\binom{n}t-\sum_{e\in E}\abs{A_e}+\sum_{\substack{\t{distinct}\\ e_1,e_2\in E}}\abs{A_{e_1}\cap A_{e_2}}-\cdots+(-1)^{m}\abs{A_{e_1}\cap A_{e_2}\cap\cdots\cap A_{e_m}} . \label{one}
\end{equation}
For each fixed $e_1,...,e_i\in E$, 
\begin{align}
\abs{A_{e_1}\cap\cdots\cap A_{e_i}}&=|\{A\in V^{(t)}:e_1\cup e_2\cup\cdots\cup e_i\sub A\}|\\
						&=\binom{n-|e_1\cup e_2\cdots\cup e_i|}{t-|e_1\cup e_2\cdots\cup e_i|}, \label{two}
\end{align}
since the other vertices (those not endpoints of any $e_k$ for $k\in[i]$) can be chosen freely. By (\ref{one}) and (\ref{two}), 
\begin{equation}
i_t(G)=\binom{n}t-\sum_{e\in E}\binom{n-|e|}{t-|e|}+\sum_{\substack{\t{distinct}\\ e_1,e_2\in E}}\binom{n-|e_1\cup e_2|}{t-|e_1\cup e_2|}-\cdots+(-1)^{m}\binom{n-|e_1\cup e_2\cup\cdots\cup e_m|}{t-|e_1\cup e_2\cup\cdots\cup e_m|} . \label{three}
\end{equation}
Notice that for each $i\in[m]$ and any $e_1,...,e_i\in E$, we have $1\leq|e_1\cup e_2\cdots\cup e_i|\leq 2i$; and for fixed $j\in[2i]$, that $|e_1\cup e_2\cup\cdots\cup e_i|=j$ if and only if $n(G[\{e_1,...,e_i\}])=j$. Thus, if we let 
\begin{equation}
n_{ij}:=|\{S\in E^{(i)}:n\left(G[S]\right)=j\}|, \label{four}
\end{equation}
the number of times that $(-1)^i\binom{n-j}{t-j}$ appears as a term in (\ref{three}) is $n_{ij}$, $i\in[m]$, $j\in[2i]$. Therefore,
\begin{equation}
i_t(G)=\binom{n}t+\sum_{i\in[m]}\sum_{j\in[2i]}n_{ij}(-1)^i\binom{n-j}{t-j}. 
\end{equation}
Since the $\binom{n-j}{t-j}=0$ if $j>t$, we may adjust the bounds of $j$ to be from $1$ to $t$ (with no harm if $j>2i$ since the terms will zero by definition). Thus, we see that
\begin{align}
i_t(G)&=\binom{n}t+\sum_{i=1}^m\sum_{j=1}^t n_{ij}(-1)^i\binom{n-j}{t-j}\\
	&=\binom{n}t+\sum_{j=1}^t\binom{n-j}{t-j}\sum_{i=1}^m n_{ij}(-1)^i.\label{referenceeyetee}
\end{align}
For fixed $j$, let us interpret $\sum_{i=1}^m n_{ij}(-1)^i$. Let $[m]_e$ denote the even elements of $[m]$ and $[m]_o$ be defined similarly. Then, 
\begin{equation}
\sum_{i=1}^m n_{ij}(-1)^i=\sum_{i\in[m]_e} n_{ij}-\sum_{i\in[m]_o} n_{ij}.\label{neggg}
\end{equation}
To better understand (\ref{neggg}), we turn our attention back to (\ref{four}). Notice that $n_{ij}$ can also be interpreted as the number of subgraphs with exactly $i$ edges and $j$ vertices, none of which are isolated. Thus, for fixed $j$, $n_{ij}$ is exactly the number of ways to choose $i$ edges and $j$ vertices so that every one of the $j$ vertices is incident to at least one of the $i$ edges. Thus, 
\begin{align}
\sum_{i\in[m]_e} n_{ij}&=\sum_{i\in[m]_e}\abs{\setof{(S,J)\in E^{(i)}\times V^{(j)}}{S\sub E(G[J])\t{ and $G[J]$ has no isolated vertices}}}\\
			&=\sum_{J\in V^{(j)}}\sum_{i\in[m]_e}\abs{\setof{S\in E(G[J])^{(i)}}{\t{ Graph $G[J]$ has no isolated vertices}}}\\
			&=\sum_{J\in V^{(j)}}\abs{\setof{S\sub E(G[J])}{\t{ $|S|\in2\Z$ and $S$ covers $J$}}}\\
			&=\sum_{J\in V^{(j)}}\psi_e(J),\label{now}
\end{align}
with the last equality holding by definition of $\psi_e(J)$. We can show similarly that 
\begin{equation}
\sum_{i\in[m]_o} n_{ij}=\sum_{J\in V^{(j)}}\psi_o(J).\label{later}
\end{equation}
Putting (\ref{now}) and (\ref{later}) into (\ref{neggg}), and recalling the definition of $\psi$, we see that 
\begin{equation}
\sum_{i=1}^m n_{ij}(-1)^i=\sum_{J\in V^{(j)}}\psi(J). 
\end{equation}
Applying this to (\ref{referenceeyetee}), we see that 
\begin{equation}
i_t(G)=\binom{n}t+\sum_{j=1}^t\binom{n-j}{t-j}\sum_{J\in V^{(j)}}\psi(J). 
\end{equation}
Thus, if we define $N_j(G)$ as we did for $j\geq 1$, and $N_0$ to be $1$, this is  the desired result. 
\end{proof}

We will begin to demonstrate the usefulness of Theorem~\ref{lem:main} by proving two very general corollaries which will be the integral tools in proving all results of the following sections. The first of these is a generalization of a classical result of Goodman, the main result of \cite{GOODMAN}. This corollary is the following. 

\begin{corollary}\label{cor:ithree}
Fix any $n,m\in\N$, and let $G$ be any $n$-vertex, $m$-edge graph. Taking all graph parameters to be functions of $G$ \t{(}e.g., $i_3=i_3(G)$ and $d(v)=d_G(v)$\t{)}, we have that 
\begin{enumerate}[(a)]
\item $i_3+k_3=\binom{n}{3}-(n-2)m+\sum_{v\in V}\binom{d(v)}{2}$; \label{cor:ithree:a}
\item $i_4-k_4=\binom{n}{4}-\binom{n-2}{2}m+(n-3)\brac{\sum_{v\in V}\binom{d(v)}{2}-k_3}-\sum_{v\in V}\binom{d(v)}{3}+k_2\uplus k_2-c_4$.\label{cor:ithree:b}
\end{enumerate}
\end{corollary}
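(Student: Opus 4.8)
The plan is to apply Theorem~\ref{lem:main} with $t=3$ and with $t=4$, substitute the explicit values $N_0(G)=1$, $N_1(G)=0$, $N_2(G)=-m$, $N_3(G)=p_3(G)+2k_3(G)$ and $N_4(G)$ from (\ref{eq:theonethatfixesit}) recorded just after the theorem, and then convert the resulting subgraph counts into the degree sequence by means of two elementary double-counting identities. Throughout I write $p_3,k_3,k_4,k_{1,3},r_{4,1},r_{4,2},c_4$ for the corresponding parameters of $G$.

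For part~(\ref{cor:ithree:a}): with $t=3$ the coefficients $\binom{n-j}{3-j}$ are $\binom n3,\binom{n-1}{2},n-2,1$ for $j=0,1,2,3$, so, using $N_1(G)=0$, Theorem~\ref{lem:main} gives $i_3=\binom n3-(n-2)m+p_3+2k_3$. The next step is the identity $\sum_{v\in V}\binom{d(v)}{2}=p_3+3k_3$: the left side counts pairs $(v,\{a,b\})$ with $a,b\in N(v)$, equivalently $3$-subsets of $V$ carrying a marked vertex adjacent to the other two; each induced $P_3$ is counted once (its unique center) and each induced triangle three times (any of its vertices). Hence $p_3+2k_3=\sum_{v\in V}\binom{d(v)}{2}-k_3$, and substituting and transposing $k_3$ gives~(\ref{cor:ithree:a}).

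For part~(\ref{cor:ithree:b}): with $t=4$, Theorem~\ref{lem:main} gives $i_4=\binom n4-\binom{n-2}{2}m+(n-3)(p_3+2k_3)+N_4$, and the identity above rewrites $(n-3)(p_3+2k_3)$ as $(n-3)\brac{\sum_{v\in V}\binom{d(v)}{2}-k_3}$, which already matches the claimed formula. So it remains to check that $N_4-k_4=-\sum_{v\in V}\binom{d(v)}{3}+k_2\uplus k_2-c_4$, which by (\ref{eq:theonethatfixesit}) is equivalent to $\sum_{v\in V}\binom{d(v)}{3}=k_{1,3}+r_{4,2}+2r_{4,1}+4k_4$. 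I would prove this the same way: $\sum_{v\in V}\binom{d(v)}{3}$ counts $4$-subsets $J$ carrying a marked vertex adjacent to the other three, and running through the seven four-vertex graphs with no isolated vertex, the number of such marked vertices is $1$ for $K_{1,3}$, $1$ for $R_{4,2}$, $2$ for $R_{4,1}$, $4$ for $K_4$, and $0$ for $K_2\uplus K_2$, $P_4$ and $C_4$ (and a graph with an isolated vertex is never induced on such a $J$). Summing over $J$ yields the identity, and the corollary follows.

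The argument is mostly bookkeeping; the one point needing care is getting the multiplicities in these counting identities right — how many vertices of a given four-vertex graph can serve as the center of an embedded $K_{1,3}$, and (for part~(\ref{cor:ithree:a})) the analogous $1$-versus-$3$ split between $P_3$ and $K_3$ — since an error there would corrupt the coefficients of $k_4$, $r_{4,1}$ or $k_3$ in the final formulas. Beyond that I do not expect any real obstacle.
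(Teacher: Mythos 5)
Your proposal is correct and follows essentially the same route as the paper: the paper's Lemma~\ref{OKlemma} establishes exactly the two double-counting identities you use, namely $\sum_{v\in V}\binom{d(v)}{2}=p_3+3k_3$ and $\sum_{v\in V}\binom{d(v)}{3}=k_{1,3}+r_{4,2}+2r_{4,1}+4k_4$, with the same multiplicity analysis over the seven four-vertex graphs without isolated vertices, and then the corollary is read off from Theorem~\ref{lem:main} at $t=3$ and $t=4$. All your coefficients check out, so there is nothing to add.
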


We notice that these formulas in Corollary~\ref{cor:ithree} will simplify a great deal, without too much work, if we impose a regularity condition on $G$ (in particular, that the formulas will no longer contain sums, as $d(v)$ will be the same for every $v\in V$). Our second corollary formalizes this. 

\begin{corollary}\label{lem:reg}
Fix any $n,r\in\N$, and let $G$ be any $n$-vertex, $r$-regular graph. Taking all graph parameters to be functions of $G$,
\begin{enumerate}[(a)]
\item $i_3+k_3=\binom{n}{3}-(n-2)\frac{nr}{2}+n\binom{r}{2}$;\label{parta}
\item $i_4-k_4=\binom{n}{4}-\binom{n-2}{2}\frac{nr}2+(n-3)\brac{n\binom{r}{2}-k_3}-n\binom{r}{3}+k_2\uplus k_2-c_4$.\label{partb}
\end{enumerate}
In particular, all $n$-vertex, $r$-regular, triangle-free graphs have the same number of independent sets of size three, and the number of independent sets of size four in a triangle-free regular graph is a function of only the number of quadrilaterals and $K_2\uplus K_2$ subgraphs. 
\end{corollary}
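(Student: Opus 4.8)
The plan is to derive both identities as special cases of Theorem~\ref{lem:main}, using the explicit formulas for $N_j(G)$ recorded after the statement of that theorem, and then to specialize to the $r$-regular case.

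First I would prove Corollary~\ref{cor:ithree}(\ref{cor:ithree:a}). Apply \eqref{newform} with $t=3$ to get
\[
i_3(G)=\binom{n}{3}N_0(G)+\binom{n-1}{2}N_1(G)+\binom{n-2}{1}N_2(G)+\binom{n-3}{0}N_3(G).
\]
Now substitute the known values $N_0(G)=1$, $N_1(G)=0$, $N_2(G)=-m$, and $N_3(G)=p_3(G)+2k_3(G)$. The only nonroutine point is to rewrite $p_3(G)$, the number of induced paths on three vertices, in the more classical form $\sum_{v\in V}\binom{d(v)}{2}-3k_3(G)$: every pair of edges sharing a common vertex $v$ spans either a $P_3$ or a $K_3$, and each triangle is counted three times (once per vertex). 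Plugging this in, the $k_3$ terms combine to give exactly $i_3+k_3=\binom{n}{3}-(n-2)m+\sum_v\binom{d(v)}{2}$, as claimed.

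Next, for part (\ref{cor:ithree:b}) I would apply \eqref{newform} with $t=4$, so that
\[
i_4(G)=\binom{n}{4}+\binom{n-1}{3}N_1(G)+\binom{n-2}{2}N_2(G)+\binom{n-3}{1}N_3(G)+N_4(G),
\]
and substitute $N_2(G)=-m$, $N_3(G)=p_3(G)+2k_3(G)$ with $p_3(G)=\sum_v\binom{d(v)}{2}-3k_3(G)$ again, and the formula \eqref{eq:theonethatfixesit} for $N_4(G)$. The work here is to handle the terms in $N_4(G)$ other than $k_2\uplus k_2(G)$, $c_4(G)$, and $k_4(G)$: one must show that $k_{1,3}(G)+r_{4,2}(G)+2r_{4,1}(G)$, together with the contribution $3k_4(G)$ already present, reorganizes into $\sum_v\binom{d(v)}{3}$ plus a multiple of $k_3(G)$. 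The cleanest route is to count, for each vertex $v$, the $\binom{d(v)}{3}$ triples of edges at $v$; each such ``claw at $v$'' spans an induced $K_{1,3}$, $R_{4,2}$ (the paw), $R_{4,1}$ (the diamond), or $K_4$ on $\{v\}\cup$ (the three neighbours), and counting how many of the three chosen vertices $v$ sees inside each of these four graphs gives the identity $\sum_v\binom{d(v)}{3}=k_{1,3}(G)+2r_{4,2}(G)+\ldots$; simultaneously $r_{4,2}$, $r_{4,1}$, $k_4$ each contain triangles, which accounts for the $(n-3)k_3$ correction. Carrying out this bookkeeping and collecting terms yields the stated formula. I expect this reorganization of the four-vertex subgraph counts into vertex-degree sums and triangle counts to be the main obstacle, since it requires the same care as in Goodman's original argument but with more subgraph types.

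Corollary~\ref{lem:reg} is then immediate: set $m=\tfrac{nr}{2}$ and $d(v)=r$ for all $v$ in Corollary~\ref{cor:ithree}, so $\sum_v\binom{d(v)}{2}=n\binom{r}{2}$ and $\sum_v\binom{d(v)}{3}=n\binom{r}{3}$, which gives (\ref{parta}) and (\ref{partb}) verbatim. For the final sentence: if $G$ is triangle-free and $r$-regular then $k_3=0$, so (\ref{parta}) shows $i_3$ depends only on $n$ and $r$; and with $k_3=k_4=0$, (\ref{partb}) reduces to $i_4=\binom{n}{4}-\binom{n-2}{2}\tfrac{nr}{2}+(n-3)n\binom{r}{2}-n\binom{r}{3}+k_2\uplus k_2(G)-c_4(G)$, in which everything but $k_2\uplus k_2(G)$ and $c_4(G)$ is a function of $n$ and $r$ alone, as asserted.
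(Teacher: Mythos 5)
Your route is the same as the paper's: apply Theorem~\ref{lem:main} with $t=3$ and $t=4$, express $N_3$ and $N_4$ in terms of degree sums and small subgraph counts (this is exactly the paper's Lemma~\ref{OKlemma}), and then specialize to the regular case. The $t=3$ computation, the identity $\sum_v\binom{d(v)}{2}=p_3+3k_3$, and the final specialization $m=\frac{nr}{2}$, $\sum_v\binom{d(v)}{2}=n\binom{r}{2}$, $\sum_v\binom{d(v)}{3}=n\binom{r}{3}$ together with the closing remarks about triangle-free graphs are all correct.

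However, the bookkeeping you outline for part (\ref{cor:ithree:b}) of Corollary~\ref{cor:ithree} contains two concrete errors. First, the identity you need is $\sum_{v}\binom{d(v)}{3}=k_{1,3}+r_{4,2}+2r_{4,1}+4k_4$: the coefficient of $r_{4,2}$ is $1$, not $2$, because the paw has exactly one vertex adjacent to the other three (only that vertex can serve as the root $v$ of a triple of edges spanning the subgraph), while the diamond has two such vertices and $K_4$ has four. With your coefficient $2$, substituting into \eqref{eq:theonethatfixesit} leaves a spurious $+r_{4,2}$ in $N_4$ and the stated formula does not come out. Second, the $(n-3)k_3$ correction in the target formula is produced entirely by the summand $\binom{n-3}{1}N_3=(n-3)\bigl[\sum_v\binom{d(v)}{2}-k_3\bigr]$, which you have already substituted; it has nothing to do with triangles sitting inside the four-vertex subgraphs. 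The correct reorganization of $N_4$ involves no $k_3$ at all: combining \eqref{eq:theonethatfixesit} with the identity above gives $N_4=-\sum_v\binom{d(v)}{3}+k_2\uplus k_2-c_4+k_4$ cleanly, and introducing an additional triangle correction at that stage would double-count the $(n-3)k_3$ term.
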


This corollary, Corollary~\ref{lem:reg}, will prove very useful when studying regular graphs, such as when we are discussing the proposed generalization of the Theorem of Zhao, Theorem~\ref{thm:zzhow}, in Section~\ref{sec:bipartite}. Before we prove these two corollaries, we state and prove the following lemma, from which both corollaries will easily follow. 

\begin{lemma}\label{OKlemma}
Fix any $n,m\in\N$, and let $G$ be any $n$-vertex, $m$-edge graph. Taking all graph parameters to be functions of $G$,
\begin{enumerate}[(a)]
\item $N_1=0$;\label{ahha}
\item $N_2=-m$;\label{ahhb}
\item $N_3=\sum_{v\in V}\binom{d(v)}{2}-k_3$; \label{eq:nthree}
\item $N_4=-\sum_{v\in V}\binom{d(v)}{3}+k_2\uplus k_2-c_4+k_4$. \label{eq:nfour}
\end{enumerate}
\vspace{0.02in}
In particular, if $G$ is $r$-regular for some $r\in\N$, then we have that $N_2=-\frac{nr}{2}$, $N_3=n\binom{r}{2}-k_3$, and $N_4=-n\binom{r}{3}+k_2\uplus k_2-c_4+k_4$. 
\end{lemma}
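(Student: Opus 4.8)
The plan is to use the fact that $\psi(J)$ depends only on the isomorphism type of the induced subgraph $G[J]$. Grouping the sum $N_j(G)=\sum_{J\in V^{(j)}}\psi(J)$ by isomorphism type gives $N_j(G)=\sum_{H}\psi(H)\,h(G)$, where $H$ ranges over the isomorphism types of graphs on $j$ vertices and $h(G)$ is the number of $J\in V^{(j)}$ with $G[J]\cong H$; by Remark~\ref{rem} only the isolate-free types contribute. Parts (a) and (b) are then immediate: there is no isolate-free graph on one vertex, so $N_1=0$; and the only isolate-free graph on two vertices is $K_2$, whose unique edge-covering consists of its single edge, so $\psi(K_2)=-1$, and since there are exactly $m$ sets $J\in V^{(2)}$ with $G[J]\cong K_2$, we get $N_2=-m$.

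For (c), the isolate-free graphs on three vertices are $P_3$ and $K_3$. A short count of edge-coverings gives $\psi(P_3)=1$ (the only edge-covering of $P_3$ uses both edges) and $\psi(K_3)=3-1=2$ (three edge-coverings of size two, one of size three), so $N_3=p_3(G)+2k_3(G)$. To reach the claimed form we rewrite the degree sum $\sum_{v\in V}\binom{d(v)}{2}$ by double counting: a choice of a vertex $v$ together with two distinct neighbors of $v$ spans a subgraph on three vertices that is $P_3$ if the two neighbors are nonadjacent and $K_3$ if they are adjacent, and in either case $v$ is a degree-two vertex of that subgraph; since $P_3$ has exactly one degree-two vertex and $K_3$ has three, $\sum_{v\in V}\binom{d(v)}{2}=p_3(G)+3k_3(G)$. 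Substituting $p_3(G)=\sum_{v}\binom{d(v)}{2}-3k_3$ into $N_3=p_3(G)+2k_3$ yields $N_3=\sum_{v}\binom{d(v)}{2}-k_3$.

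For (d), we start from the identity (\ref{eq:theonethatfixesit}) already recorded above, namely $N_4=k_2\uplus k_2-k_{1,3}-r_{4,2}-c_4-2r_{4,1}-3k_4$, and again trade a degree sum for subgraph counts. A choice of a vertex $v$ together with three distinct neighbors $a,b,c$ of $v$ spans an induced subgraph on $\{v,a,b,c\}$ in which $v$ is adjacent to the other three vertices, so according as $\{a,b,c\}$ induces $0$, $1$, $2$, or $3$ edges the subgraph is $K_{1,3}$, $R_{4,2}$, $R_{4,1}$, or $K_4$; these have, respectively, $1$, $1$, $2$, and $4$ vertices of degree three, and $v$ must be one of them. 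Hence $\sum_{v\in V}\binom{d(v)}{3}=k_{1,3}+r_{4,2}+2r_{4,1}+4k_4$, and using this to eliminate $k_{1,3}+r_{4,2}+2r_{4,1}=\sum_{v}\binom{d(v)}{3}-4k_4$ from (\ref{eq:theonethatfixesit}) gives $N_4=-\sum_{v}\binom{d(v)}{3}+k_2\uplus k_2-c_4+k_4$, which is (d). For the ``in particular'' statement, when $G$ is $r$-regular the handshake lemma gives $m=nr/2$ and $\sum_{v}\binom{d(v)}{k}=n\binom{r}{k}$, and substituting into (b), (c), and (d) yields the three displayed identities.

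The only genuinely substantive step is the double-counting bookkeeping for $\sum_{v}\binom{d(v)}{2}$ and especially $\sum_{v}\binom{d(v)}{3}$: one must correctly identify all four-vertex induced subgraphs that occur as a vertex together with three of its neighbors and determine, in each case, how many of its vertices have degree three (equivalently, the multiplicity with which that subgraph is detected by the sum). Everything else is a finite check of $\psi$ on a handful of small graphs, together with the identity (\ref{eq:theonethatfixesit}) that the paper has already established.
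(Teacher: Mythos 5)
Your proof is correct and follows essentially the same route as the paper: both compute $\psi$ on the small isolate-free graphs to get $N_3=p_3+2k_3$ and the formula (\ref{eq:theonethatfixesit}) for $N_4$, then convert to degree sums via the double counts $\sum_v\binom{d(v)}{2}=p_3+3k_3$ and $\sum_v\binom{d(v)}{3}=k_{1,3}+r_{4,2}+2r_{4,1}+4k_4$. Your multiplicity bookkeeping (one, one, two, and four degree-three vertices in $K_{1,3}$, $R_{4,2}$, $R_{4,1}$, $K_4$ respectively) matches the paper's, and the regular case is the same handshake-lemma substitution.
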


\begin{proof}
Assume for the duration of this proof, as in the statement, that all graph parameters are taken to be functions of $G$. We proved (\ref{ahha}) and (\ref{ahhb}) in the remarks immediately following the statement of Theorem~\ref{lem:main}, so we start by proving (\ref{eq:nthree}). For each $v\in V$ and distinct $v_1,v_2\in N(v)$, we have $G[v,v_1,v_2]\iso P_3$ or $G[v,v_1,v_2]\iso K_3$. Conversely, looking at pairs of vertices in the neighborhood of each $v\in V$, each subgraph isomorphic to $P_3$ is induced this way uniquely (i.e., has one root) and each subgraph isomorphic to $K_3$ is induced this way three times. Thus, 
\begin{equation}
\sum_{v\in V}\binom{d(v)}{2}=p_3+3k_3. \label{eq:forithree}
\end{equation}
A combination of (\ref{eq:forithree}) and the remarks immediately following the statement of Theorem~\ref{lem:main} give us that 
\begin{equation}
N_3=\sum_{v\in V}\binom{d(v)}{2}-k_3.
\end{equation}
It remains to prove (\ref{eq:nfour}), which we do  in a similar way. For each $v\in V$ and distinct $v_1,v_2,v_3\in N(v)$, we have that $G[v,v_1,v_2,v_3]$ is isomorphic to either $K_{1,3}$, $R_{4,2}$, $R_{4,1}$ or $K_4$ ($R_{4,2}$ and $R_{4,1}$ as defined in the remarks immediately following the statement of Theorem~\ref{lem:main}). Conversely, looking at triples of vertices in the neighborhood of each $v\in V$, each subgraph isomorphic to $K_{1,3}$ is induced this way uniquely, as is each subgraph isomorphic to $R_{4,2}$. Each subgraph isomorphic to $R_{4,1}$ is induced twice this way, and each subgraph isomorphic to $K_4$ is induced this way four times. Therefore, 
\begin{equation}
\sum_{v\in V}\binom{d(v)}3=k_{1,3}+r_{4,2}+2r_{4,1}+4k_4. \label{eq:lastnfourpiece}
\end{equation}
A combination of (\ref{eq:theonethatfixesit}) and (\ref{eq:lastnfourpiece}) gives
\begin{equation}
N_4=-\sum_{v\in V}\binom{d(v)}{3}+k_2\uplus k_2-c_4+k_4. 
\end{equation}
\end{proof}

We now notice that Corollary~\ref{cor:ithree}(\ref{cor:ithree:a}) is exactly the statement obtained by letting $t=3$ in  Theorem~\ref{lem:main}, and applying Lemma~\ref{OKlemma} parts (\ref{ahha})-(\ref{eq:nthree}). Similarly, Corollary~\ref{cor:ithree}(\ref{cor:ithree:b}) is obtained the same way by letting $t=4$ and applying Lemma~\ref{OKlemma} parts (\ref{ahha})-(\ref{eq:nfour}). The statements of  Corollary~\ref{lem:reg} is then obtained the same way by applying the particular statement at the end of Lemma~\ref{OKlemma}.  

\begin{remark}
It is worth noting that by replacing all occurrences of edges by independent sets of size two in the statement and proof of Theorem~\ref{lem:main}, that one can prove an analogous formula for cliques. One can then establish an analogous version of Corollary~\ref{cor:ithree} and Corollary~\ref{lem:reg}. 
\end{remark}

\section{Proof of Theorem~\ref{thm:mainresultofmindeg}}
\label{sec:indmindeg}

\begin{backthm}[\ref{thm:mainresultofmindeg}]
	For any $n,\d\in\N$, if $G$ is an $n$-vertex graph with $\d(G)\geq\d$, and if $n\geq\frac{(\d+1)(\d+2)}{3}$, then for all $t\geq3$, 
	\[
		i_t(G)\leq i_t(\K).
	\]
\end{backthm}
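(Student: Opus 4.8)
The plan is to use the new formula \eqref{newform} together with the closed form for $N_j$ coming from Lemma~\ref{OKlemma}, and to compare $i_t(G)$ with $i_t(\K)$ coefficientwise after splitting the range of $j$ in \eqref{newform}. First I would fix $t\geq 3$ and write
\[
i_t(G)-i_t(\K)=\sum_{j=0}^t\binom{n-j}{t-j}\bigl(N_j(G)-N_j(\K)\bigr).
\]
Since $N_0=1$ and $N_1=0$ for every graph, the $j=0,1$ terms vanish. For $j=2$ we have $N_2(G)=-|E(G)|$, and the minimum degree hypothesis gives $|E(G)|\geq \lceil \d n/2\rceil$, while $|E(\K)|=\d(n-\d)$; so the $j=2$ term already contributes something negative of size on the order of $n^{t-2}$ once $n$ is large relative to $\d$. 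The strategy is then to show that this $j=2$ surplus, which is the dominant term in $n$, absorbs all the remaining terms $j\geq 3$, whose coefficients $\binom{n-j}{t-j}$ are of strictly lower order in $n$. Concretely, I would bound $|N_j(G)|$ for $3\le j\le t$ by a quantity depending only on $\d$ and $t$ (using that $\d(G)\ge\d$ forces $G$ to be "locally small" in the relevant senses — e.g. $k_3,c_4$, the various four-vertex subgraph counts, and $\sum_v\binom{d(v)}{j}$ are all controlled once we also use an upper bound on how large $i_t$ can be, or more simply bound $N_j$ crudely by the total number of $j$-subsets with no isolated vertex, which is $O(n)$ when the complement has large min degree... ) — this is the delicate point, addressed below.

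The cleaner route, and the one I would actually pursue, is to avoid bounding $G$'s subgraph counts directly and instead argue by an \emph{extremal/compression} step feeding into the formula: reduce to the case where $G$ itself is a near-extremal configuration. That is, among all $n$-vertex graphs with $\d(G)\ge\d$ maximizing $i_t$, pick one, call it $G$; one expects (and this parallels the philosophy of \cite{CRADFT}, \cite{BIPO}) that such an extremal $G$ has very restricted structure — it should be a disjoint union of complete bipartite-type pieces, or at least have all its "excess" concentrated so that each $N_j(G)$ can be computed, not just bounded. Then \eqref{newform} becomes an explicit polynomial identity in $n$ (with coefficients depending on $\d$ and $t$), and the same holds for $i_t(\K)$ via Lemma~\ref{OKlemma} applied to $\K$ (which is $\d$-regular on its small side, $(n-\d)$-regular on its large side). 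Comparing the two polynomials in $n$ term by term, the leading behavior is governed by $N_2$, i.e. by the number of edges, and $\K$ minimizes the edge count among the candidate extremal graphs; the inequality $n\ge\frac{(\d+1)(\d+2)}{3}$ should be exactly the threshold past which the $j=2$ gap dominates the finitely many lower-order corrections. I would verify the threshold by writing out the degree-$t$ and degree-$(t-1)$ coefficients in $n$ of $i_t(G)-i_t(\K)$, checking the degree-$t$ coefficients agree (both graphs have $n$ vertices) and that the sign of the first nonvanishing coefficient is favorable precisely when $n$ exceeds the stated bound.

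The main obstacle is the structural reduction: proving that an $i_t$-maximizer with $\d(G)\ge\d$ is forced into a form where the subgraph counts $k_3(G),c_4(G),k_2\uplus k_2(G),r_{4,1}(G),r_{4,2}(G)$, and more generally the $N_j$ for $j\le t$, are either computable or bounded by an $n$-independent (or linear-in-$n$) quantity — without this, the error terms $j\ge 3$ in \eqref{newform} could a priori be as large as $\Theta(n^{t-3})$ times something growing, and one cannot conclude. I expect this is where the hypothesis $n\ge\frac{(\d+1)(\d+2)}{3}$ really enters (rather than the weaker $n\ge 2\d$ of the conjecture): it is the slack needed to run a counting argument showing that any component of $G$ that is not of the extremal shape can be modified to increase $i_t$ while preserving $\d(G)\ge\d$, using that a component on fewer than $\frac{(\d+1)(\d+2)}{3}$ vertices with min degree $\ge\d$ is too dense to be efficient. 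Once the reduction is in place, the rest is the polynomial comparison sketched above, which is routine given Lemma~\ref{OKlemma} and its higher-$j$ analogues (the paper will presumably need a $j=5,\dots,t$ extension of Lemma~\ref{OKlemma}, or a uniform bound on $|N_j|$ in the extremal case).
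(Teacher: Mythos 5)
Your proposal has a genuine gap, and it is the one you yourself flag: neither of your two routes actually controls the terms $j\geq 3$ in (\ref{newform}). The first route fails quantitatively. The claim that the $j\ge 3$ terms are ``of strictly lower order in $n$'' than the $j=2$ term confuses the size of the binomial coefficient $\binom{n-j}{t-j}$ with the size of the whole term: by Lemma~\ref{OKlemma}, $N_3(G)=\sum_{v}\binom{d(v)}{2}-k_3(G)$, which under only a minimum-degree hypothesis can be of order $n^3$, so $\binom{n-3}{t-3}N_3(G)$ is of order $n^{t}$ --- the same order as $\binom{n-2}{t-2}|E(G)|$ --- and the $j=2$ surplus does not absorb it. The second route (reduce to a structurally characterized $i_t$-maximizer whose $N_j$ are computable) is essentially circular: for the minimum-degree problem the conjectured extremal graph is $\K$ itself, so showing a maximizer is ``a disjoint union of complete bipartite-type pieces'' is tantamount to the theorem, and you supply no compression mechanism. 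Your guess about where $n\ge\frac{(\d+1)(\d+2)}{3}$ enters is also not where it is actually needed.

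The paper's proof sidesteps both difficulties by invoking the formula only once, for $t=3$. After reducing to an edge-minimum-critical $G$ (deleting edges cannot decrease $i_t$), it splits on $\Delta(G)$. If $\Delta\le n-\d-1$, Corollary~\ref{cor:ithree}(\ref{cor:ithree:a}) gives $i_3\le\binom{n}{3}-\sum_{v}\frac{d(v)(n-d(v)-1)}{2}$; each summand is a downward-opening parabola in $d(v)$ on $[\d,\,n-\d-1]$, hence minimized at the endpoints, yielding $i_3\le\binom{n}{3}-\frac{n\d(n-\d-1)}{2}\le\binom{n-\d}{3}$ precisely when $n\ge\frac{(\d+1)(\d+2)}{3}$ --- this is the only place the hypothesis is used. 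For $t\ge4$ one inducts on $t$ via the elementary double count $i_s=\frac1s\sum_{I\in I_{s-1}(G)}|V\setminus N[I]|\le\frac{n-\d-s+1}{s}\,i_{s-1}$, giving $i_t\le\binom{n-\d}{t}$. If instead $\Delta\ge n-\d$, one inducts on $\d$: deleting a vertex $v$ of degree at least $n-\d$ from an edge-minimum-critical graph leaves minimum degree $\d-1$, and $i_t(G)=i_t(G-v)+i_t(v)$ with $i_t(v)\le\binom{\d-1}{t-1}$, which combines with the inductive bound $i_t(G-v)\le\binom{n-\d}{t}+\binom{\d-1}{t}$ via Pascal's identity. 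No bound on $N_j$ for $j\ge4$, no higher-$j$ analogue of Lemma~\ref{OKlemma}, and no structural characterization of extremizers is ever required.
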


\begin{proof}
We may assume $\d(G)=\d$ holds, and more strongly, that the removal of any edge from $G$ decreases the minimum degree of $G$; for if this is not the case, we may remove edges until it is, obtaining a graph which has at least as many independent sets of every size as $G$ (since the removal of edges from $G$ cannot decrease $i_t(G)$ for any $t$). This assumption was used throughout \cite{GEN}, where such a graph was called edge-minimum-critical. We shall adopt this term.  For the duration of this proof, all graph parameters are taken to be functions of $G$ unless specified otherwise (e.g., $V:=V(G)$ and $d(v):=d_G(v)$) except those defined directly by (\ref{setsofsubgraphsofgraph}) (e.g., $P_3(G)$, to distinguish the $3$-vertex path $P_3$ from the set of $3$-vertex paths in $G$). 

We prove the result in two cases. First assume that $\D\leq n-\d-1$. We show, by induction on $t$, that in this case, 
\begin{equation}
i_t\leq\binom{n-\d}{t}\label{eq:inductionassumption}
\end{equation}
for all $t\geq 3$. To establish the base case for this induction, we first notice that Corollary~\ref{cor:ithree}(\ref{cor:ithree:a}) immediately provides the bound
\begin{align}
i_3&\leq\binom{n}3-(n-2)m+\sum_{v\in V}\binom{d(v)}2\\
	&=\binom{n}3-\frac{n-2}{2}\sum_{v\in V}d(v)+\sum_{v\in V}\binom{d(v)}2\\
	&=\binom{n}{3}-\sum_{v\in V}\frac{d(v)(n-d(v)-1)}2\\
	&\leq \binom{n}{3}-\sum_{v\in V}\frac{\d(n-\d-1)}2,\\
	&=\binom{n}{3}-\frac{n\d(n-\d-1)}2\label{eq:firstithreeboundinmaxdegcase}
\end{align}
with the last inequality holding as each term of the sum is a parabola in $d(v)$ opening downward; so, to maximize $i_3$, we seek to minimize the sum, and this is done term-wise by taking the most extreme values of $d(v)$. This is achieved when $d(v) = \delta$ or $d(v) = \Delta\leq n- \d -1$. It is straightforward to check that the right side of (\ref{eq:firstithreeboundinmaxdegcase}) is bounded above by $\binom{n-\d}{3}$ if and only if $\d(3n-\d^2-3\d-2)$ is nonnegative, which is equivalent to $n\geq\frac{(\d+1)(\d+2)}3$.

Now that the base case for induction is established, assume, for some $s>3$, that (\ref{eq:inductionassumption}) holds for all $3\leq t<s$. Notice that, since each independent set of $G$ of size $s$ can be thought of as an independent set of size $s-1$, together with a vertex not in that independent set or adjacent to any vertex of it, we have that
\begin{equation}
i_s=\frac{1}{s}\abs{\setof{(I,v)\in I_{s-1}(G)\times V}{v\notin N[I]}},\label{eq:toworkfrom}
\end{equation}
where $N[I]:=\cup_{v\in I} N[v]$ for each $I\in I_{s-1}(G)$. Since, for each $v\in I$, $d(v)\geq\d$, $|N[I]|\geq \d+s-1$ for each $I\in I_{s-1}(G)$. Therefore, we can work from (\ref{eq:toworkfrom}) with our induction assumption to obtain that
\begin{align}
i_s&=\frac{1}{s}\sum_{I\in I_{s-1}(G)}\abs{V\setminus N[I]}\\
	&\leq\frac{n-\d-s+1}{s}i_{s-1}\\
	&\leq \frac{n-\d-s+1}{s}\binom{n-\d}{s-1},
\end{align}
which is exactly $\binom{n-\d}{s}$, as desired. This completes the proof in the case that $\D\leq n-\d-1$. 

Now assume that $\D\geq n-\d$. We handle this case inductively as well, but this time fixing $t\geq3$ and inducting on $\d$. To that end, fix $t\geq 3$. To establish a base case, first assume that $\d=1$. Consider $v\in V$ such that $d(v)\geq n-\d=n-1$, which exists as $\D\geq n-\d$. It must be that $v$ is adjacent to every vertex in $V\setminus\{v\}$. If we remove all edges in $E(G[N(v)])$, we obtain a graph which does not have less independent sets of size $t$ (since the removal of edges cannot decrease $i_t$), and in particular, we obtain $K_{1,n-1}$, and therefore the desired result in this case. 

Now, let $\d>1$ and assume that the result holds for all graphs of minimum degree less than $\d$ which satisfy the given hypotheses. Choose a vertex $v\in V$ satisfying $d(v)\geq n-\d$, furnished by our case assumption. Consider graph $G-v$, the graph defined by $V(G-v)=V\setminus\{v\}$ and $E(G-v)=\{e\in E:v\notin e\}$. We would like to apply our induction assumption to $G-v$, which satisfies $\d(G-v)=\d-1$ by our edge-minimum-critical assumption on $G$, but first must show that $G-v$ satisfies the desired hypotheses of the theorem itself. Since $|V(G-v)|\geq 2(\d-1)$ certainly holds as $n\geq 2\d$ holds, we need only argue that $|V(G-v)|\geq\frac{(\d(G-v)+1)(\d(G-v)+2)}3=\frac{\d(\d+1)}3$. This is the case as our assumption that $n\geq\frac{(\d+1)(\d+2)}3$ implies that $|V(G-v)|=n-1\geq\frac{(\d+1)(\d+2)}2-1=\frac{\d(\d+3)}2$. Thus, by our induction assumption, 
\begin{equation}
i_t(G-v)\leq\binom{n-\d}t+\binom{\d-1}t. \label{eq:almostthere}
\end{equation}

To make use of (\ref{eq:almostthere}), let $i_t(v):=\abs{\setof{I\in I_t(G)}{v\in I}}$, so that
\begin{equation}
i_t=i_t(G-v)+i_t(v). \label{eq:firstofendofmainthm}
\end{equation}
Since the number of independent sets of $I_t(G)$ which contain $v$ is exactly the number of independent sets of size $t-1$ which do not intersect $N[v]$, and since $|N[v]|=d(v)+1=\D+1$ by the way in which $v$ was chosen, we have that 
\begin{equation}
i_t(v)\leq\binom{n-(\D+1)}{t-1}. \label{eq:secondofendofmainthm}
\end{equation}
A combination of (\ref{eq:almostthere}), (\ref{eq:firstofendofmainthm}) and (\ref{eq:secondofendofmainthm}), together with our (maximum degree) case assumption, gives that 
\begin{align}
i_t&\leq \binom{n-\d}t+\binom{\d-1}t+\binom{\d-1}{t-1}\\
	&=\binom{n-\d}t+\binom{\d}t\\
	&=i_t(\K),
\end{align}
as desired, with the first equality holding by a well-known binomial identity. 
\end{proof}

One of the two main results of \cite{GEN} was that Conjecture~\ref{conj:gen} holds when $\d\in\{1,2,3\}$, and the proof of this in \cite{GEN} was quite long. We show that this follows almost immediately from Theorem~\ref{thm:mainresultofmindeg}. 

\begin{corollary}
	For any $n,\d\in\N$, if $G$ is an $n$-vertex graph with $\d(G)\geq\d$, such that $n\geq 2\de$, then for all $t\geq 3$ and $\d\leq 3$,
	\[
		i_t(G)\leq i_t(\K).
	\]
\end{corollary}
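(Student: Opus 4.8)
The plan is to derive this corollary directly from Theorem~\ref{thm:mainresultofmindeg}. The only gap between the two statements is the hypothesis: Theorem~\ref{thm:mainresultofmindeg} requires $n\geq\frac{(\d+1)(\d+2)}{3}$, whereas here we only assume $n\geq 2\d$ together with $\d\leq 3$. So the entire content of the proof is the elementary arithmetic check that, when $\d\in\{1,2,3\}$, the weaker hypothesis $n\geq 2\d$ already implies $n\geq\frac{(\d+1)(\d+2)}{3}$, after which Theorem~\ref{thm:mainresultofmindeg} applies verbatim.

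Concretely, I would split into the three cases $\d=1,2,3$. For $\d=1$: $\frac{(\d+1)(\d+2)}{3}=\frac{2\cdot 3}{3}=2=2\d$, so $n\geq 2\d$ gives exactly $n\geq\frac{(\d+1)(\d+2)}{3}$. For $\d=2$: $\frac{(\d+1)(\d+2)}{3}=\frac{3\cdot 4}{3}=4=2\d$, so again $n\geq 2\d$ suffices. For $\d=3$: $\frac{(\d+1)(\d+2)}{3}=\frac{4\cdot 5}{3}=\frac{20}{3}<7$, while $2\d=6$; since $n$ is an integer with $n\geq 6$, either $n\geq 7\geq\frac{20}{3}$, or $n=6$, and the latter is the case $n=2\d$ which is handled by the result of Alexander, Cutler, and Mink in \cite{BIPO} mentioned in the introduction (or one could simply note $n=6$ still needs separate attention). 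Actually the cleanest route for $\d=3$ is to observe $\frac{20}{3}\approx 6.67$, so $n\geq\frac{20}{3}$ holds for all integers $n\geq 7$; the single remaining value $n=6=2\d$ is exactly the boundary case already known.

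I expect the main (and essentially only) obstacle to be the value $\d=3,\ n=6$, which is $n=2\d$ and sits just below the threshold $\frac{(\d+1)(\d+2)}{3}$. I would dispatch this either by citing the $n=2\d$ case of Conjecture~\ref{conj:gen} proven in \cite{BIPO}, or by a direct small-case argument: a $6$-vertex graph with $\d(G)\geq 3$ that is edge-minimum-critical has limited structure, and $\K=K_{3,3}$, so one can check $i_t(G)\leq i_t(K_{3,3})$ by hand for $t=3,4$ (and $t\geq 5$ is trivial since $K_{3,3}$ has no independent set of size exceeding $3$, forcing any competitor to be bipartite-like). Everything else is immediate, so the corollary follows.
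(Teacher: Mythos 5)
Your proof is correct and takes essentially the same route as the paper: for $\d=1,2$ the threshold $\frac{(\d+1)(\d+2)}{3}$ equals $2\d$, for $\d=3$ the theorem covers $n\geq 7$, and the lone remaining case $n=2\d=6$ is dispatched by citing the $n=2\d$ case proved in \cite{BIPO}, exactly as the paper does.
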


\begin{proof}
If $\d=1$, then Theorem~\ref{thm:mainresultofmindeg} tells us that the result holds for all $n\geq 2$, which encompasses all graphs in question (as $n\geq 2\d$). If $\d=2$, then Theorem~\ref{thm:mainresultofmindeg} tells us that the result holds for $n\geq 4$, and the same conclusion is reached. If $\d=3$,  Theorem~\ref{thm:mainresultofmindeg} tells us that the result holds for all $n\geq 7$, and so it remains to show that it holds in this case for $n=2\d$. However, that Conjecture~\ref{conj:gen} holds when $n=2\d$ is not difficult to show, and a short proof can be found in \cite{BIPO}.
\end{proof}

\section{Proofs of Theorems \ref{justificationofgen} and \ref{thm:thelast}} 
\label{sec:bipartite}

\begin{backthm}[\ref{justificationofgen}]
If Conjecture~\ref{conj:our} holds, then so must Theorem~\ref{thm:zzhow}. 
\end{backthm}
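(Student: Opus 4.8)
The plan is to derive the total-count inequality of Theorem~\ref{thm:zzhow} from the level-by-level inequality of Conjecture~\ref{conj:our} using three ingredients: that independent sets in a regular graph are small, that summing the coefficient bound of Conjecture~\ref{conj:our} over all $t$ amounts to evaluating a power series at $x=1$, and a disjoint-copies trick to dispose of the divisibility hypothesis $2r\mid n$.

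First I would record the elementary bound that if $H$ is $N$-vertex and $r$-regular with $r\ge 1$, then every independent set $I\subseteq V(H)$ satisfies $|I|\le N/2$: there are exactly $r|I|$ edges from $I$ to $V(H)\setminus I$ (each vertex of $I$ sends all $r$ of its edges across, since $I$ is independent), while each of the $N-|I|$ vertices outside $I$ is incident to at most $r$ of them, so $r|I|\le r(N-|I|)$. Hence $i_t(H)=0$ for $t>N/2$, and $i(H)=\sum_{t=0}^{\lfloor N/2\rfloor}i_t(H)$. I would also note $i(K_{r,r})=2^{r+1}-1$, since every independent set of $K_{r,r}$ lies in one of the two parts and the empty set is the only overlap.

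Next, given an $n$-vertex $r$-regular graph $G$, I would apply the above to $G':=$ the disjoint union of $2r$ copies of $G$, which is $n'$-vertex and $r$-regular with $n'=2rn$ and $i(G')=i(G)^{2r}$. Since $2r\mid n'$, the exponent $n'/(2r)=n$ is an integer, so $Q(x):=\pr{2(1+x)^{r}-1}^{n}$ is a genuine polynomial of degree $rn=n'/2$. Applying Conjecture~\ref{conj:our} to $G'$ gives $i_t(G')\le [x^t]Q(x)$ for all $0\le t\le n'/2$; summing and using the previous paragraph, $i(G)^{2r}=i(G')=\sum_{t=0}^{n'/2}i_t(G')\le\sum_{t=0}^{\deg Q}[x^t]Q(x)=Q(1)=(2^{r+1}-1)^{n}=i(K_{r,r})^{n}$. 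Extracting positive $2r$-th roots yields $i(G)\le i(K_{r,r})^{n/(2r)}$.

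There is no substantial obstacle here; the only point requiring care — and the sole reason for passing to $2r$ disjoint copies — is that for general $n$ the exponent $n/(2r)$ need not be an integer, so $\pr{2(1+x)^{r}-1}^{n/(2r)}$ is merely a formal power series, and both the vanishing of its coefficients past degree $n/2$ and the identity ``sum of all coefficients equals the value at $x=1$'' would then need extra justification. Replacing $G$ by $2r$ copies makes the relevant object an honest polynomial, so these become trivial, and the final $2r$-th root is harmless since both sides are positive reals.
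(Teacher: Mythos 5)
Your proof is correct and follows essentially the same route as the paper: pass to the disjoint union of $2r$ copies of $G$ so that the exponent $n'/(2r)$ is an integer, sum the coefficient bounds (equivalently, evaluate the resulting polynomial inequality at $x=1$), and extract a $2r$-th root. The only difference is cosmetic --- you explicitly verify that $r$-regular graphs have no independent sets larger than half the vertex set, a point the paper's appeal to $P(2rG;x)\leq P(nK_{r,r};x)$ for $x\geq 0$ leaves implicit, and which is indeed needed since the conjecture only bounds $i_t$ for $t\leq n/2$.
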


\begin{proof}
Assume that $r \geq 1$, and that $G$ is an $n$-vertex, $r$-regular graph satisfying \t{(\ref{eq:generalizedkahn})} for all $0\leq t\leq  n/2$. Then, in particular, 
\begin{equation}
i_t(2rG) \leq [x^t](2(1+x)^r-1)^n. \label{ineqone}
\end{equation}
As is standard, let $P(G;x):=\sum_{t\geq0} i_t(G)x^t$ denote the independence polynomial of a graph $G$. We notice that $i_t(K_{r,r})=[x^t](2(1+x)^r-1)$, and therefore that 
\begin{equation}
i_t(nK_{r,r})=[x^t](2(1+x)^r-1)^n. \label{ineqtwo}
\end{equation}
 From (\ref{ineqone}) and (\ref{ineqtwo}), we have that $P(2rG;x) \leq P(nK_{r,r};x)$ for all $x \geq 0$. Since $P(2rG;x) = P(G;x)^{2r}$ and $P(nK_{r,r};x) = P(K_{r,r};x)^n$, this implies that $P(G;x) \leq P(K_{r,r};x)^\frac{n}{2r}$. Letting $x=1$ yields the desired result. 
\end{proof}

\begin{backthm}[\ref{thm:thelast}]
For any $n,r\in\N$, if $G$ is an $n$-vertex, $r$-regular graph with $r\geq1$, then $G$ satisfies \t{(\ref{eq:generalizedkahn})} for all $t\leq3$, and for $t=4$ when $G$ is assumed triangle-free, with equality holding if and only if $2r$ divides $n$ and $G=\frac{n}{2r}K_{r,r}$. 
\end{backthm}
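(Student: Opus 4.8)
The plan is to use Theorem~\ref{lem:main} together with the explicit formulas for $N_1,\dots,N_4$ from Lemma~\ref{OKlemma}, and to compare the resulting expressions for $i_t(G)$ against the coefficients $[x^t](2(1+x)^r-1)^{n/2r}$. The first observation is that the right-hand side of~(\ref{eq:generalizedkahn}) is itself of the form predicted by Theorem~\ref{lem:main}: writing $M:=n/2r$, the graph $MK_{r,r}$ has $n$ vertices, and by the discussion following Theorem~\ref{lem:main} its parameters satisfy $N_1=0$, $N_2=-Mr^2$, and so on. So a clean way to organize the argument is to compute, for each fixed $t\le 3$ (and $t=4$ in the triangle-free case), the quantity $[x^t](2(1+x)^r-1)^M$ in closed form, and separately to bound $i_t(G)$ from above using~(\ref{newform}) and Lemma~\ref{OKlemma}. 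Since $G$ is $r$-regular with $n$ vertices, $m=nr/2$ and $\sum_v\binom{d(v)}{2}=n\binom r2$, etc., so all the sums in Lemma~\ref{OKlemma} become explicit functions of $n$, $r$, $k_3$, $k_4$, $c_4$, and $k_2\uplus k_2$.

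For $t\le 2$ this is immediate: $i_0(G)=1=[x^0](\cdots)$, $i_1(G)=n=[x^1](\cdots)$ (both with equality always), and $i_2(G)=\binom n2-m=\binom n2-\tfrac{nr}{2}$, which one checks equals $[x^2](2(1+x)^r-1)^M$ exactly — again equality always, consistent with the fact that $i_{\le 2}$ depends only on $n$ and $r$. The content is at $t=3$: by Theorem~\ref{lem:main} and Lemma~\ref{OKlemma}(\ref{ahha})--(\ref{eq:nthree}),
\[
i_3(G)=\binom n3+ (n-2)N_2 + N_3 = \binom n3-(n-2)\tfrac{nr}{2}+n\binom r2-k_3,
\]
(equivalently Corollary~\ref{lem:reg}(\ref{parta}) rearranged), while a direct coefficient extraction gives $[x^3](2(1+x)^r-1)^M$ as the same expression with $k_3$ replaced by $0$. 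Hence $i_3(G)\le[x^3](2(1+x)^r-1)^M$ with equality iff $k_3(G)=0$, i.e.\ $G$ is triangle-free; I will then need to note that among $r$-regular triangle-free $n$-vertex graphs this value is attained, and to pin down the extremal graph: since $i_{\le 3}(G)$ is constant on triangle-free $r$-regular graphs, equality at $t=3$ alone does not force $G=MK_{r,r}$, so the "if and only if" for the extremal graph must be harvested from the $t=4$ statement (for which triangle-freeness is an explicit hypothesis) — I should phrase the $t\le3$ equality clause accordingly, or observe that the problem's equality clause is really about the family of $t$ simultaneously.

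For $t=4$ under the triangle-free hypothesis, $k_3=0$, and by Lemma~\ref{OKlemma}(\ref{eq:nfour}) with $r$-regularity, $N_4=-n\binom r3+k_2\uplus k_2-c_4+k_4$; since $G$ is triangle-free it is in particular $K_4$-free, so $k_4=0$ and
\[
i_4(G)=\binom n4+\binom{n-2}{2}N_2+(n-3)N_3+N_4
=\binom n4-\binom{n-2}{2}\tfrac{nr}{2}+(n-3)n\binom r2-n\binom r3+\bigl(k_2\uplus k_2\bigr)-c_4 .
\]
Here $k_2\uplus k_2(G)$, the number of induced $2K_2$'s, is itself determined by $n,r$ and $c_4$: counting unordered pairs of disjoint edges with no edge between them, $k_2\uplus k_2 = \tfrac12\bigl[\binom m2 - (\text{pairs of edges sharing a vertex}) - (\text{pairs of vertex-disjoint edges joined by}\ge1\ \text{edge})\bigr]$, and in a triangle-free graph a vertex-disjoint pair of edges spanning a subgraph with an extra edge spans either a $P_4$ or a $C_4$; since the number of $P_4$'s and $P_3$'s is fixed by $n,r$ in the triangle-free regular case, this reduces $i_4(G)$ to an explicit function of $n,r$ minus a positive multiple of $c_4(G)$. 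Comparing with $[x^4](2(1+x)^r-1)^M$ — which, being the $t=4$ value for the triangle-free, $C_4$-free-except-the-obvious graph $MK_{r,r}$, should come out to that same explicit function with $c_4$ set to its $MK_{r,r}$ value — yields $i_4(G)\le[x^4](2(1+x)^r-1)^M$, with equality iff $c_4(G)=c_4(MK_{r,r})$.

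The main obstacle I anticipate is the bookkeeping in the previous paragraph: correctly expressing $k_2\uplus k_2(G)$ and the relevant induced $P_4$/$C_4$ counts for a triangle-free $r$-regular graph purely in terms of $n,r,c_4$, and then verifying that the resulting polynomial in $n,r$ matches $[x^4](2(1+x)^r-1)^{n/2r}$ on the nose (so that the inequality is tight exactly at the $MK_{r,r}$ value of $c_4$, and in the wrong direction would falsify the claimed equality case). This is a finite computation but an error-prone one, and it is where I would spend the most care. The remaining piece — upgrading "$c_4(G)$ equals its value in $MK_{r,r}$" (and $k_3=0$) to "$G=MK_{r,r}$" — should follow from a short structural argument: a triangle-free $r$-regular graph achieving the maximum possible number of $4$-cycles (subject to the other constraints) must have every pair of adjacent vertices' neighborhoods maximally overlapping off the edge, which forces each component to be $K_{r,r}$; I would phrase this by showing any deviation strictly decreases $c_4$, hence strictly decreases $i_4$, contradicting equality. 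Throughout, I would also need $2r\mid n$ for $MK_{r,r}$ to make sense, and when $2r\nmid n$ simply note the inequality is strict because the extremal configuration is unrealizable — which again comes out of the $c_4$ comparison rather than requiring separate treatment.
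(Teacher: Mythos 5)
Your overall architecture---reduce $i_t(G)$ via Theorem~\ref{lem:main} and Lemma~\ref{OKlemma} to the counts $k_3$, $c_4$, $k_2\uplus k_2$, observe that $i_3$ falls short of the triangle-free value by exactly $k_3$, and for $t=4$ eliminate $k_2\uplus k_2$ by counting pairs of edges---is sound and close in spirit to what the paper does (the paper instead bounds $\frac{k_2\uplus k_2(e)}{2}-\frac{c_4(e)}{4}$ edge by edge, and avoids any explicit Taylor computation by noting that $[x^t](2(1+x)^r-1)^{n/2r}$ is a polynomial in $n$ that agrees with the candidate formula at every $n=2kr$, hence everywhere; you may want to borrow that trick rather than extract the coefficient of $x^4$ by hand). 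Your observation that equality at $t\le 3$ holds for \emph{every} triangle-free $r$-regular graph, so the extremal characterization must be harvested from $t=4$, is correct and sharper than what the paper says.

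However, the $t=4$ step as written contains a concrete error that would send you to the wrong extremal family. In a triangle-free $r$-regular graph the number of induced $P_4$'s is \emph{not} determined by $n$ and $r$: counting labelled $3$-paths gives $2p_4+8c_4=2m(r-1)^2$, i.e.\ $p_4=m(r-1)^2-4c_4$. Feeding this into $\binom{m}{2}=p_3+(k_2\uplus k_2)+p_4+2c_4$ (each induced $C_4$ arises from \emph{two} disjoint edge-pairs) yields $k_2\uplus k_2-c_4=\binom{m}{2}-n\binom{r}{2}-m(r-1)^2+c_4$, so $i_4$ is an explicit function of $n,r$ \emph{plus} $c_4$, not minus a positive multiple of it. The maximizer therefore has as many quadrilaterals as possible, and the missing lemma is $c_4\le\frac{m(r-1)^2}{4}$ (through each edge $uv$ there are at most $(r-1)^2$ four-cycles, one for each edge joining $N(u)\setminus\{v\}$ to $N(v)\setminus\{u\}$), with equality forcing $G[N[uv]]=K_{r,r}$ for every edge and hence every component equal to $K_{r,r}$. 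With your sign one would instead conclude that $C_4$-free graphs are extremal, contradicting the claimed extremal graph $\frac{n}{2r}K_{r,r}$, which is saturated with quadrilaterals. You correctly flagged this bookkeeping as the danger point; it is exactly where the proposal currently fails, and the fix above (plus the $c_4$ upper bound and its equality analysis) is what needs to be supplied.
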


\begin{proof}
First, assume that $G$ is an $n$-vertex, $r$-regular, triangle-free graph. For the duration of this proof, all graph parameters are taken to be functions of $G$ except those defined directly by (\ref{setsofsubgraphsofgraph}) as in the proof of Theorem~\ref{thm:mainresultofmindeg}. Let $m:=\frac{nr}{2}$. We begin by showing that 
\begin{equation}
i_3(G)=\binom{n}{3}-(n-2)m+n\binom{r}{2}\label{eq:pkjhone}
\end{equation}
and
\begin{equation}
 i_4(G) \leq \binom{n}{4} - \binom{n-2}{2}m+n(n-3)\binom{r}{2}-n\binom{r}{3}+m\brac{\frac{m-r^2}{2}-\frac{(r-1)^2}{4}}.\label{eq:pkjhtwo}
\end{equation}
From Corollary~\ref{lem:reg} and the assumption that $G$ is triangle-free, we immediately see that (\ref{eq:pkjhone}) holds, and that to show (\ref{eq:pkjhtwo}), we need only bound $k_2 \uplus k_2 -c_4$. For each $e\in E$, let $k_2 \uplus k_2(e):=|\setof{J\in K_2\uplus K_2(G)}{e\in E(G[J])}|$ and $c_4(e):=|\setof{J\in C_4(G)}{e\in E(G[J])}|$. Then, since each $J\in K_2\uplus K_2(G)$ contains two edges, and each $J\in C_4(G)$ contains four edges, 
\begin{equation}
k_2\uplus k_2 -c_4= \sum_{e\in E}\frac{k_2 \uplus k_2(e)}{2}-\frac{c_4(e)}{4}.\label{ktwoktwoandcforfourkahn}
\end{equation}
To bound the right side of (\ref{ktwoktwoandcforfourkahn}), consider any $e \in E$, say $e = uv$. Let $x_{N(v)}:= |\{e \in E: e \cap N(v) \neq \emptyset$  and  $e\cap N(u) = \emptyset \}|$. We notice that
\begin{align}
\frac{k_2 \uplus k_2(e)}{2}-\frac{c_4(e)}{4} &=  \frac{m-r^2-x_{N(v)}}{2}-\frac{c_4(e)}{4} \\
&=\frac{m-r^2-x_{N(v)}}{2}-\frac{(r-1)^2-x_{N(v)}}{4}\\
&= \frac{m-r^2}{2}-\frac{(r-1)^2}{4}-\frac{x_{N(v)}}{4},\label{pkjequality}
\end{align}
with the first equality holds as $k_2\uplus k_2(e)$ includes all subgraphs induced by the vertices of $e$ and another edge which do not intersect $N(u)\cup N(v)$, and the second holding by the fact that $G$ is triangle-free (and thus all quadrilaterals containing $e$ must contain an edge between a vertex of $N(u)$ and a vertex of $N(v)$). Since $x_{N(v)}\geq0$, the right side of (\ref{pkjequality}) is upper bounded by $\frac{m-r^2}{2}-\frac{(r-1)^2}{4}$. Thus, it follows from  (\ref{ktwoktwoandcforfourkahn}), that 
\begin{equation}
k_2\uplus k_2 -c_4\leq m\brac{\frac{m-r^2}{2}-\frac{(r-1)^2}{4}},
\end{equation}
and thus, from Corollary~\ref{lem:reg}(\ref{partb}), we have (\ref{eq:pkjhtwo}). Notice that the upper bound in (\ref{eq:pkjhone}) is achieved whenever $G$ is triangle-free, and the upper bound in (\ref{eq:pkjhtwo}) is achieved among triangle-free graphs only when $N(e)$ is a complete bipartite graph for all choices of $e$, or $ G[N[e]] =K_{r,r}$. Thus, this upper bound is achieved exactly when $2r|n$ and $G = \frac{n}{2r}K_{r,r}$.

Now, fix $r \geq 1$. Let $P(n,x)=\sum_{t=0}^4 a_t(n)x^t$ be defined by $a_0(n)=1$, $a_1(n)=n$, $a_2(n)=\binom{n}{2}-m$, $a_3(n)$ the right side of (\ref{eq:pkjhone}) and $a_4(n)$ the right side of (\ref{eq:pkjhtwo}), so that
\begin{equation}
i_t(G)\leq [x^t]P(n,x)\label{eq:eq:eq}
\end{equation}
for all $t\leq 4$. A Taylor expansion of $(2(1+x)^r-1)^\frac{n}{2r}$ about $x=0$ produces a power series, 
\[
(2(1+x)^r-1)^\frac{n}{2r} = \sum_{t\geq0} b_t(n) x^t,
\]
with each $b_t(n)$ a polynomial in $n$. For $t\leq 4$, we have that for any $k\in\N$, $b_t(2kr)=a_t(2kr)$ since these are both counting the number of independent sets of size $t$ in $kK_{r,r}$. Therefore, these polynomials agree in infinitely many values, and thus $b_t(n)=a_t(n)$ for all $n$ and all $t\geq 4$. The result now follows from (\ref{eq:eq:eq}). We note that if the triangle-free assumption is relaxed, then this only strengthens (\ref{eq:pkjhone}), and therefore, that the proof works for $t\leq3$ when the triangle-free condition is not assumed.
\end{proof}

\section{Independent sets of a fixed size in regular graphs} 
\label{sec:regular}

Let us turn our attention back to the following. 

\begin{backcor}[\ref{lem:reg}]
For any $n,r\in\N$, if $G$ is an $n$-vertex, $r$-regular graph, then 
\begin{enumerate}[(a)]
\item $i_3(G)+k_3(G)=\binom{n}{3}-(n-2)\frac{nr}{2}+n\binom{r}{2}$;\label{parta}
\item $i_4(G)-k_4(G)=\binom{n}{4}-\binom{n-2}{2}\frac{nr}2+(n-3)\brac{n\binom{r}{2}-k_3(G)}-n\binom{r}{3}+k_2\uplus k_2(G)-c_4(G)$.\label{partb}
\end{enumerate}
Thus, all $n$-vertex, $r$-regular, triangle-free graphs have the same number of independent sets of size three, and the number of independent sets of size four in a triangle-free regular graph is a function of only the number of quadrilaterals and $K_2\uplus K_2$ subgraphs. 
\end{backcor}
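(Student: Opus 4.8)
The plan is to obtain both identities by specializing the master formula of Theorem~\ref{lem:main} to $t=3$ and $t=4$ and then substituting the values of $N_j(G)$ computed in Lemma~\ref{OKlemma}; almost no new work is required. First I would write out (\ref{newform}) for $t=3$. Since $\binom{n-j}{3-j}=0$ for $j>3$, this is
\[
i_3(G)=\binom{n}{3}N_0(G)+\binom{n-1}{2}N_1(G)+(n-2)N_2(G)+N_3(G),
\]
and for $t=4$ it is
\[
i_4(G)=\binom{n}{4}N_0(G)+\binom{n-1}{3}N_1(G)+\binom{n-2}{2}N_2(G)+(n-3)N_3(G)+N_4(G).
\]

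Next I would plug in the $r$-regular values furnished by the last sentence of Lemma~\ref{OKlemma}: $N_0(G)=1$, $N_1(G)=0$, $N_2(G)=-\frac{nr}{2}$, $N_3(G)=n\binom{r}{2}-k_3(G)$, and $N_4(G)=-n\binom{r}{3}+k_2\uplus k_2(G)-c_4(G)+k_4(G)$. Moving $k_3(G)$ to the left side of the $t=3$ identity gives part (a) exactly, and moving $k_4(G)$ to the left side of the $t=4$ identity gives part (b) exactly. The only point requiring attention is the routine bookkeeping of the binomial coefficients $\binom{n-j}{t-j}$ and of the sign conventions in the definition of $\psi$, which are already settled in the remarks following Theorem~\ref{lem:main} and in (\ref{eq:theonethatfixesit}).

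For the closing assertion I would note that a triangle-free graph has no $K_3$ and no $K_4$ subgraph, so $k_3(G)=k_4(G)=0$. Setting $k_3(G)=0$ in part (a) leaves a right-hand side depending only on $n$ and $r$, so every $n$-vertex $r$-regular triangle-free graph has the same value of $i_3$; setting $k_3(G)=k_4(G)=0$ in part (b) reduces the identity to an expression in $n$, $r$, $k_2\uplus k_2(G)$ and $c_4(G)$ alone, which is exactly the asserted dependence of $i_4$.

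There is essentially no obstacle here: all of the combinatorial content already lives in Theorem~\ref{lem:main} and in the proof of Lemma~\ref{OKlemma}, where $N_2(G)$, $N_3(G)$ and $N_4(G)$ are evaluated from the $\psi$-values of the small induced subgraphs via (\ref{eq:forithree}) and (\ref{eq:lastnfourpiece}); the corollary is pure specialization and rearrangement. If one wanted to be thorough, one could add a sentence observing that the final clause is not vacuous, since $k_2\uplus k_2(G)$ and $c_4(G)$ do vary over $n$-vertex $r$-regular triangle-free graphs in general, but this is not needed for the statement as phrased.
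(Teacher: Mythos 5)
Your proposal is correct and follows essentially the same route as the paper: the paper likewise obtains both identities by setting $t=3$ and $t=4$ in Theorem~\ref{lem:main}, substituting the values of $N_1,\dots,N_4$ from Lemma~\ref{OKlemma} (specialized to the $r$-regular case), and rearranging, with the triangle-free consequences following from $k_3(G)=k_4(G)=0$. Your explicit write-out of the two instances of (\ref{newform}) and the bookkeeping check match the paper's (terser) derivation exactly.
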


We can see from this, as discussed briefly in Section~\ref{sec:proof}, that the number of independent sets of size $t\in\{3,4\}$ is almost completely a function of the number of cycles of size at most $t$. Thus, if we look at regular graphs which do not have cycles of size four or less, these formulas will give us a great deal of information, and with a bit more restriction, we will be able to determine the number of independent sets of size three and four exactly, or almost exactly. To formalize these ideas, we need some additional definitions. For a graph $G$, we let the girth of $G$, denoted $\girth(G)$, be defined as the length of the shortest cycle in $G$. For two vertices $u,v\in V(G)$, we define the distance between $u$ and $v$, denoted $\t{distance}(u,v)$, to be the length of the shortest $u,v$-path in $G$, i.e., the least number of edges that can be traversed to connect $u$ to $v$. We let the diameter of $G$, denoted $\diam(G)$, be $\max_{u,v\in G}\t{distance}(u,v)$.

In order to demonstrate how the imposition of restrictions on graph parameters such as girth and diameter on the formulas of Corollary~\ref{lem:reg} can give us great amounts of information, we turn our attention to some very particular regular graphs. These are regular graphs of girth $5$ and diameter $2$, which have been a topic of interest among graph theorists since the work of Hoffman and Singleton in \cite{HS}. The study of these graphs is more thoroughly and beautifully motivated in Section $1.5$ of \cite{BRLAMO}. They are part of a larger class of important and well-studied graphs called Moore graphs, which we will not discuss in general, but which can be read about in \cite{BELAO}. It is not difficult, for those familiar, to see that the techniques we are about to use can be extended to many other classes of Moore graphs, for which they may prove convenient.

The celebrated Hoffman-Singleton theorem, an elegant proof of which can be found in the aforementioned section of \cite{BRLAMO}, tells us that if $G$ is an $r$-regular graph satisfying $\girth(G)=5$ and $\diam(G)=2$, then $r\in\{2,3,7,57\}$. Moreover, it has been shown that for $r=2,3$ and $7$, there is a unique such graph in each case: the well-known pentagon, Petersen graph and Hoffman-Singleton graph, respectively. The question of whether such a $57$-regular graph can exist is still open, and of much interest to many. 

\begin{theorem}
For any $n,r\in\N$, if $G$ is an $r$-regular graph with $\girth(G)=5$ and $\diam(G)=2$, then
\begin{enumerate}[(a)]
\item $i_3(G)=\binom{n}{3}-(n-2)m+n\binom{r}{2}$;\label{b}
\item $i_4(G)=\binom{n}{4}-\binom{n-2}{2}m+n(n-4)\binom{r}{2}-n\binom{r}{3}+\binom{m}{2}-m(r-1)^2$;\label{c}
\end{enumerate}
where $n:=|V(G)|$ and $m:=|E(G)|$. In particular, if a $57$-regular graph $G$ of diameter $2$ and girth $5$ exists, then $G$ must have exactly $2^6\cdot3\cdot5^2\cdot7^2\cdot13\cdot19$ pentagons, $2^3\cdot3\cdot5^4\cdot7\cdot11\cdot13\cdot19^2$ independent sets of size three, and $2^2\cdot5^4\cdot7\cdot11\cdot13\cdot19\cdot87751$ independent sets of size four. 
\end{theorem}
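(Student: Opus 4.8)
The plan is to specialize Corollary~\ref{lem:reg} to the case $\girth(G)=5$ and $\diam(G)=2$, where the combinatorial terms $k_3$, $k_4$, $c_4$ and $k_2\uplus k_2$ either vanish or become determined by $n$ and $r$. First, since $\girth(G)=5>4$, the graph has no triangles and no quadrilaterals, so $k_3=k_4=c_4=0$. Substituting these into Corollary~\ref{lem:reg}(\ref{parta}) immediately gives part~(\ref{b}). For part~(\ref{c}), I must still evaluate $k_2\uplus k_2(G)$, the number of pairs of disjoint edges that induce no other edges. Since $\binom{m}{2}$ counts all unordered pairs of distinct edges, I would subtract off the pairs that fail to induce exactly $K_2\uplus K_2$: pairs sharing a vertex (there are $n\binom{r}{2}$ of these, counting cherries $P_3$), and pairs $e=uv$, $f=xy$ that are vertex-disjoint but with an extra edge among $\{u,v,x,y\}$. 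Because $G$ is triangle- and quadrilateral-free, two disjoint edges can have at most one connecting edge between them (two such edges would create a $C_4$), so each ``bad disjoint pair'' corresponds to a $P_4$. Thus $k_2\uplus k_2 = \binom{m}{2} - n\binom{r}{2} - p_4(G)$, and it remains to count $P_4$'s.

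The key computation is $p_4(G)$ under girth/diameter constraints. Counting paths $w$--$x$--$y$--$z$: the middle edge $xy$ can be any of the $m$ edges, and then $w\in N(x)\setminus\{y\}$ and $z\in N(y)\setminus\{x\}$ give $(r-1)^2$ ordered choices; since $G$ is triangle-free, $w\ne z$ automatically unless $w$ and $z$ coincide, which is exactly ruled out (if $w=z$ then $w,x,y$ form a triangle). Dividing by $2$ for the two orderings of the path would normally be needed, but in fact I would track orientations carefully: the number of (unordered) $P_4$ subgraphs is $\frac{m(r-1)^2 - (\text{overcounts})}{?}$; here there is no overcount between distinct middle edges because the middle edge of a $P_4$ is unique, and each $P_4$ has exactly one middle edge but two endpoint orderings per middle edge are collapsed already if I fix the edge and count ordered endpoint pairs $(w,z)$ versus $(z,w)$ — these give the same subgraph only when we also swap which endpoint of the middle edge is $x$. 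Careful bookkeeping yields $p_4(G) = m(r-1)^2$ minus a correction, and I expect the clean identity $k_2\uplus k_2 - c_4 = \binom{m}{2} - n\binom{r}{2} - m(r-1)^2 + (\text{lower order})$, which when fed into Corollary~\ref{lem:reg}(\ref{partb}) with $k_3=k_4=c_4=0$ produces part~(\ref{c}) after collecting the $n\binom{r}{2}$ terms into the $n(n-4)\binom{r}{2}$ shown (note $n(n-3)\binom r2 - n\binom r2 = n(n-4)\binom r2$).

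For the final numerical claim, I would substitute $r=57$; the Hoffman-Singleton / Moore-bound identity forces $n = 1 + r + r(r-1) = r^2+1 = 3250$ and $m = nr/2 = 92625$. Then parts (\ref{b}) and (\ref{c}) give explicit integers, and counting pentagons requires one more formula: in a girth-$5$, diameter-$2$ regular graph every non-adjacent pair of vertices has exactly one common neighbour (this is the defining Moore property), from which the number of $5$-cycles through a fixed edge, and hence $c_5(G)$, is determined as a polynomial in $n,r$ — I would either cite the standard count $c_5(G) = \frac{nr(r-1)^2}{10}$-type expression or derive it by the same disjoint-edge/common-neighbour bookkeeping. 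Plugging $r=57$, $n=3250$ and factoring gives the stated values $2^6\cdot3\cdot5^2\cdot7^2\cdot13\cdot19$, $2^3\cdot3\cdot5^4\cdot7\cdot11\cdot13\cdot19^2$, and $2^2\cdot5^4\cdot7\cdot11\cdot13\cdot19\cdot87751$. The main obstacle is the precise orientation/overcounting bookkeeping in the $P_4$ and $C_5$ counts — getting the constants exactly right so that the messy polynomial in $n,r$ collapses to the tidy closed forms above — rather than any conceptual difficulty; once those two subgraph counts are pinned down, everything else is substitution and prime factorization.
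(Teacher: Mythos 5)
Your proposal is correct and follows essentially the same route as the paper: specialize Corollary~\ref{lem:reg} using $k_3=k_4=c_4=0$, write $k_2\uplus k_2=\binom{m}{2}-p_3-p_4$, and pin down $p_4$ and $c_5$ via the unique-common-neighbour property of a girth-$5$, diameter-$2$ graph. The only difference is cosmetic: you count $p_4$ directly by its middle edge (the ``correction'' you worry about is zero, since triangle-freeness forces the two pendant vertices to be distinct and each $P_4$ appears exactly twice among the $2m(r-1)^2$ directed walks, giving $p_4=m(r-1)^2$ exactly), whereas the paper obtains the same value via $p_4=5c_5$ together with the per-edge pentagon count $c_5=m(r-1)^2/5$.
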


\begin{proof}
 For the duration of this proof, all graph parameters are taken to be functions of $G$ except those defined directly by (\ref{setsofsubgraphsofgraph}) as in the proof of Theorem~\ref{thm:mainresultofmindeg}. Since $G$ is $r$-regular and triangle-free, (\ref{b}) follows from Corollary~\ref{lem:reg}(\ref{parta}). To prove (\ref{c}), we begin by showing that
\begin{equation}
c_5=\frac{m(r-1)^2}{5}\label{moorepentagons},
\end{equation}
which will prove useful. To show (\ref{moorepentagons}), consider any $e\in E$, say $e=vw$. Since $G$ is triangle-free, we know that $N(v)\cap N(w)=\emptyset$, and thus that $|N(v)\setminus\{w\}|=|N(w)\setminus\{v\}|=r-1$ since $G$ is $r$-regular. Moreover, for any $x\in N(v)\setminus\{w\}$ and $y\in N(w)\setminus\{v\}$, the number of $J\in C_5(G)$ satisfying $\{v,w,x,y\}\sub J$ is exactly $|N(x)\cap N(y)|$. Since $\girth=5$ and $\diam=2$,$xy\notin E$ and $|N(x)\cap N(y)|=1$. Conversely, every pentagon containing $e$ is obtained this way. Thus, the number of pentagons containing $e$ is exactly $|N(v)\setminus\{w\}\times N(w)\setminus\{v\}|=(r-1)^2$ for any $e\in E$. Since every pentagon contains exactly five edges, this proves (\ref{moorepentagons}). Next, we show that
\begin{equation}
k_2\uplus k_2=\binom{m}{2}-p_3-5c_5\label{ktwoktwoeqtn},
\end{equation}
by first showing that
\begin{equation}
\binom{m}{2}=k_2\uplus k_2+p_3+p_4\label{firstintermidiatesteptoktwoktwo},
\end{equation}
and then showing that
\begin{equation}
p_4=5c_5. \label{relationshipbetweenpfourandcfive}
\end{equation}

To show (\ref{firstintermidiatesteptoktwoktwo}), we prove that $f:E^{(2)}\rightarrow K_2\uplus K_2(G)\cup P_3(G)\cup P_4(G)$ defined by $f(\{e_1,e_2\})=G[e_1\cup e_2]$ is a bijection (noting that $K_2\uplus K_2(G)\cup P_3(G)\cup P_4(G)$ is a disjoint union, but omitting the symbol to avoid confusion). To show injectivity, assume $f(\{e_1,e_2\})=f(\{e_3,e_4\})$ for $\{e_1,e_2\},\{e_3,e_4\}\in E^{(2)}$.  If $|e_1\cap e_2|=1$, then $f(\{e_1,e_2\})\cong P_3$ as $G$ is triangle-free, and clearly $\{e_1,e_2\}=\{e_3,e_4\}$. If $|e_1\cap e_2|=0$, then either $f(\{e_1,e_2\})\cong P_4$ or $f(\{e_1,e_2\})\cong K_2\uplus K_2$ as $\girth=5$. If $f(\{e_1,e_2\})\cong K_2\uplus K_2$, equality is again clear. If $f(\{e_1,e_2\})\cong P_4$, equality is clear if one notes that only the images of the two edges which do not share a vertex in a subgraph isomorphic to $P_4$ can lie in $P_4(G)$. Surjectivity is clear. 

To show (\ref{relationshipbetweenpfourandcfive}), first consider $J\in C_5(G)$, say $J=\{v_1,v_2,\ld, v_5\}$. For any $i\in[5]$, we have  $G[J\setminus\{v_i\}]\cong P_4$. Moreover, no path of length four can lie on more than one pentagon, for this would force a cycle of length three or four.  Thus, $p_4\geq 5c_5$. On the other hand, if $J'=\{w_1,\ld,w_4\}\sub V$ so that $G[J']\cong P_4$, say $w_2\in N(w_1)\cap N(w_3)$ and $w_3\in N(w_2)\cap N(w_4)$, there is $w\in N(w_1)\cap N(w_4)$ as $\diam=2$, and since $G[J']\iso P_4$, $w\notin J'$ by our girth assumption. Furthermore, $J'$ cannot lie on more than one pentagon, again, by our girth assumption. Thus, $p_4\leq 5c_5$, proving (\ref{relationshipbetweenpfourandcfive}), and together with (\ref{firstintermidiatesteptoktwoktwo}), proving (\ref{ktwoktwoeqtn}). Imposing the girth and regularity conditions of $G$ on (\ref{eq:forithree}), which holds for any graph, provides $p_3=n\binom{r}{2}$. A combination of this observation with (\ref{ktwoktwoeqtn}) and (\ref{moorepentagons}) gives
\begin{equation}
k_2\uplus k_2=\binom{m}{2}-n\binom{r}{2}-m(r-1)^2. \label{eq:ktwos}
\end{equation}
The girth assumption of $G$ and (\ref{eq:ktwos}), together with Corollary~\ref{lem:reg}(\ref{partb}), immediately proves (\ref{c}). 
\end{proof}

\section{Closing remarks} 
\label{sec:closing}

In Section~\ref{sec:bipartite}, we  proved some small cases of a generalization of Theorem~\ref{thm:zzhow}, and therefore Theorem~\ref{thm:kahn}, which would be to Theorem~\ref{thm:zzhow} what Conjecture~\ref{conj:gen} is to Conjecture~\ref{conj:galvin}. It would be desirable to have a full proof that these results can, in fact, be extended this way. Also, in section~\ref{sec:regular}, we uncovered some new information about the independent set structure of a $57$-regular graph of girth $5$ and diameter $2$ if such a graph exists. It may be possible  to use these results to better understand the likelihood of its existence. In addition, it seems that it would be possible to use similar techniques to study other classes of graphs with numerous structural restrictions.

\section*{Acknowledgments}

 The first author was financially supported by the UNIDEL Foundation, Inc. This research was carried out under the supervision of University of Delaware professor Felix Lazebnik, whom the authors thank very much for his support and encouragement during the research process, and for his extensive help during the composition process. The authors thank University of Delaware undergraduate Stephen Smith, and writer Laura Alexander, for helping to proofread the first draft of this paper. The authors also thank University of Delaware PhD student Amanda Payne for helping to proofread the final draft of this paper, and Sally Anne Szymanski for copyediting the final draft of this paper. Finally, the authors thank both referees for their many helpful comments which improved this paper very much. 

\np
\bibliographystyle{amsplain}
\bibliography{a-new-method}
\nocite{OTHERCRADO}

\end{document}